\DeclareSymbolFont{AMSb}{U}{msb}{m}{n}
\definecolor{britishracinggreen}{rgb}{0.0, 0.26, 0.15}
\definecolor{cobalt}{rgb}{0.0, 0.28, 0.67}
  \DeclareSymbolFont{usualmathcal}{OMS}{cmsy}{m}{n}
  \DeclareSymbolFontAlphabet{\mathcal}{usualmathcal}
  \numberwithin{equation}{section}
\def\be{\begin{equation}}    
\def\ee{\end{equation}}
\def\bitem{\begin{itemize}}
\def\eitem{\end{itemize}}
\def\benum{\begin{enumerate}}
\def\eenum{\end{enumerate}}
\def\ra{\rightarrow}
\def\lra{\longrightarrow}
\def\mbgp{\overline{\mathcal M}_{g,P}}
\def\jbgp{\overline{\mathcal J}_{g,P}}
\def\cbgp{\overline{\mathcal C}_{g,P}}
\def\Z{\mathbb Z}
\def\M{\overline{\mathcal M}}
\def\O{\mathscr O}
\DeclareMathOperator{\Fit}{Fit}
\DeclareMathOperator{\ch}{ch}
\DeclareMathOperator{\Td}{Td}
\DeclareMathOperator{\Pic}{Pic}
\DeclareMathOperator{\Aut}{Aut}
\DeclareMathOperator{\rk}{rk}
\theoremstyle{definition}
\newtheorem*{lemma*}{Lemma}
\newtheorem*{theorem*}{Theorem}
\newtheorem*{example*}{Example}
\newtheorem*{fact*}{Fact}
\newtheorem*{notation*}{Notation}
\newtheorem*{definition*}{Definition}
\newtheorem*{prop*}{Proposition}
\newtheorem*{remark*}{Remark}
\newtheorem*{corollary*}{Corollary}
\newtheorem*{conventions*}{Conventions}
\newtheorem{definition}{Definition}[section]
\newtheorem{notation}[definition]{Notation}
\newtheorem{convention}[definition]{Convention}
\newtheorem{example}[definition]{Example}
\newtheorem{remark}[definition]{Remark}
\newtheoremstyle{thm} 
        {3mm}
        {3mm}
        {\slshape}
        {0mm}
        {\bfseries}
        {.}
        {1mm}
        {}
\theoremstyle{thm}
\newtheorem{corollary}[definition]{Corollary}
\newtheorem{lemma}[definition]{Lemma}
\newtheorem{thm}{Theorem}
\tikzset{commutative diagrams/arrow style=math font}
\tikzset{commutative diagrams/.cd,
mysymbol/.style={start anchor=center,end anchor=center,draw=none}}
\newcommand\MySymb[2][\square]{%
  \arrow[mysymbol]{#2}[description]{#1}}
\tikzset{
shift up/.style={
to path={([yshift=#1]\tikztostart.east) -- ([yshift=#1]\tikztotarget.west) \tikztonodes}
}
}
\DeclareMathAlphabet{\mathpzc}{OT1}{pzc}{m}{it}
\newcommand*{\defeq}{\mathrel{\vcenter{\baselineskip0.5ex \lineskiplimit0pt
                     \hbox{\scriptsize.}\hbox{\scriptsize.}}}%
                     =}
\title[Pullbacks of universal Brill--Noether classes via Abel--Jacobi morphisms]{Pullbacks of universal Brill--Noether classes \\ via Abel--Jacobi morphisms}
\author[N. Pagani]{Nicola Pagani}
\address{University of Liverpool}
\email[Nicola Pagani]{pagani@liv.ac.uk}
\author[A. T. Ricolfi]{Andrea T. Ricolfi}
\address{SISSA Trieste}
\email[Andrea Ricolfi]{aricolfi@sissa.it}
\author[J. van Zelm]{Jason van Zelm}
\address{Humboldt Universit\"{a}t Berlin}
\email[Jason van Zelm]{jasonvanzelm@outlook.com}
\begin{document}
\maketitle

\begin{abstract} Following Mumford and Chiodo, we compute the Chern character of the derived pushforward $\ch (R^\bullet\pi_\ast\O(\mathsf D))$, for $\mathsf D$ an arbitrary element of the Picard group of the universal curve over the moduli stack of stable marked curves. This allows us to express the pullback of universal Brill--Noether classes via Abel--Jacobi sections to the compactified universal Jacobians, for all compactifications such that the section is a well-defined morphism.
\end{abstract}

{\hypersetup{linkcolor=black}
\tableofcontents}

\section{Introduction}
Let $\pi \colon \, \cbgp \to \mbgp$ be the universal curve over the moduli stack of stable marked curves, where $P$ is a nonempty set of markings. The (weak version of) Franchetta's conjecture, now a theorem due to Harer \cite{Harer} and Arbarello--Cornalba \cite{AC87}, gives an explicit description of the Picard group of the universal curve. Every divisor on $\cbgp$, up to a divisor pulled back from $\mbgp$, is linearly equivalent to
\begin{equation} \label{univdivisor}
\mathsf D=\ell\widetilde{K}_{\pi}+ \sum_{p\in P} d_p \sigma_p + \sum_{h,S} a_{h,S} C_{h,S}
\end{equation}
for some integers $\ell$, $d_p$ and $a_{h,S}$. Here $\widetilde{K}_{\pi}= c_1 (\omega_{\pi})$ is the first Chern class of the relative dualising sheaf, $\sigma_p$ is the class of the $p$-th section, and $C_{h,S}$ (see Definition \ref{definition_CHS}) is the class of the irreducible component not containing the moving point lying above the boundary divisor $\Delta_{h,S} \subset \mbgp$ (more details in Section~\ref{Notation}).

Our main result is an explicit formula for the Chern character of the derived pushforward
\[
\ch (R^\bullet\pi_\ast\O(\mathsf D)),
\]
in terms of certain standard generators of the tautological ring (boundary strata classes decorated with $\kappa$ classes and $\psi$ classes). These generators, denoted $\mathbf{X},  \widetilde{\mathbf{X}}, \widetilde{\mathbf{Y}}$ and $\mathbf{Z}$, are introduced in Notation~\ref{notation_tautological}. To state our main result we first recall the definition of the Bernoulli polynomials $B_t(\ell)$, which are defined by the identity
\[
  \sum_{t\geq 0}\frac{B_t(\ell)}{t!}x^t\defeq e^{\ell x}\frac{x}{e^x-1}.
\]
In particular, $B_t \defeq B_t(0)$ are the classical Bernoulli numbers.

\smallbreak
In Section~\ref{formulazza} we prove:

\begin{thm}\label{MainThm}
If $\mathsf D$ is as in \eqref{univdivisor}, then
\[
\ch(R^\bullet\pi_\ast\O(\mathsf D)) = \Omega + \Phi,
\]
where
\begin{multline*}
    \Omega = \sum_{\substack{t\geq 1\\a+b=t}} \frac{B_b(\ell)}{b!}\sum_{\substack{r\geq 0\\k_1+\cdots +k_r = a \\ k_j>0 \\ (h_1,S_1)< \cdots < (h_r,S_r)}} \left(\prod_{j=1}^{r} \frac{a_{h_j,S_j}^{k_j}}{k_j!} \right) \mathbf{Z}_{(\mathbf{h}_r),(\mathbf{S}_r)}^{\mathbf{k}_r,b-1}  \\ 
 + \sum_{\substack{t\geq 1\\ a+b=t \\ b>0 \\ \alpha+\beta = b}}\frac{(-1)^\beta B_\beta(\ell)}{\alpha!\beta!}\sum_{\substack{r\geq 0\\k_1+\cdots +k_r = a \\ k_j>0 \\ (h_1,S_1)< \cdots < (h_r,S_r)}}\left(\prod_{j=1}^{r} \frac{a_{h_j,S_j}^{k_j}}{k_j!} \sum_{p\in P\setminus S_r} d_p^\alpha(-\psi_p)^{b-1}\right)\mathbf{X}_{(\mathbf{h}_r),(\mathbf{S}_r)}^{\mathbf{k}_r} 
\end{multline*}
and 
 \begin{multline*}
 \Phi=
 \sum_{t\geq 2} \sum_{\substack{a+b=t\\ b>0\textup{ even}\\ a\geq 0}} \frac{ B_{b}}{b!} \sum_{\substack{r \geq 0\\k_1+\cdots +k_r = a \\ k_j>0 \\ (h_1,S_1)< \cdots < (h_r,S_r)}}
   \prod_{j=1}^r  \frac{a_{h_j,S_j}^{k_j}}{k_j!} \sum_{ 0 \leq e \leq b-2} (-1)^{e}\cdot \\
   \cdot \Bigg( \left(\sum_{(l,T)> (h_r,S_r)}\widetilde{\mathbf{X}}_{(\mathbf{h}_r,l),(\mathbf{S}_r,T)}^{\mathbf{k}_r,(e,b-2-e)}\right)  + \widetilde{\mathbf{Y}}_{(\mathbf{h}_r),(\mathbf{S}_r)}^{\mathbf{k}_r,(e,b-2-e)} 
    +  (-1)^{k_r} \widetilde{\mathbf{X}}_{(\mathbf{h}_{r-1},h_r),(\mathbf{S}_{r-1},S_r)}^{\mathbf{k}_{r-1},(e+k_r,b-2-e)}  \Bigg)
\end{multline*}
and the symbol $(h_1,S_1)<\cdots<(h_r,S_r)$ denotes a strictly ordered chain of stable bipartitions (see Notation~\ref{firstnotation}). 
\end{thm}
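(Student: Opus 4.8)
The plan is to apply Grothendieck--Riemann--Roch to the universal curve $\pi\colon \cbgp \to \mbgp$ and then to expand the resulting push-forward into the tautological generators. The starting point is the identity
\[
\ch(R^\bullet\pi_\ast\O(\mathsf D)) = \pi_\ast\!\left( e^{\mathsf D}\cdot \Td(T_\pi)\right),
\]
where $T_\pi$ is the relative tangent complex. Because $\pi$ is a family of nodal curves, $\Td(T_\pi)$ is not simply the Todd class of a line bundle: following Mumford and Chiodo one writes it as a ``smooth'' contribution $c_1(\omega_\pi)/(e^{c_1(\omega_\pi)}-1)$, whose expansion produces the classical Bernoulli numbers $B_b=B_b(0)$, corrected by a term $\iota_\ast(\cdots)$ supported on the singular locus of the fibres and carrying the two cotangent-line classes $\psi',\psi''$ at the branches of each node. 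I expect the node-correction term to be the source of $\Phi$, while the smooth contribution, once multiplied by $e^{\ell\widetilde K_\pi}$ coming from the $\ell\widetilde K_\pi$ summand of $\mathsf D$, yields the shifted Bernoulli polynomials $B_t(\ell)$ in $\Omega$ (recall $e^{\ell x}x/(e^x-1)=\sum_t B_t(\ell)x^t/t!$).

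The second step is to expand the divisorial exponential $e^{\mathsf D}=e^{\ell\widetilde K_\pi}\,e^{\sum_p d_p\sigma_p}\,e^{\sum_{h,S} a_{h,S}C_{h,S}}$ and to treat each group of summands by its intersection theory. The sections $\sigma_p$ are mutually disjoint and disjoint from the nodes, and satisfy $\sigma_p^{\,n}=\sigma_{p\ast}\big((-\psi_p)^{n-1}\big)$; since $\pi\circ\sigma_p=\id$, this turns the $\sigma_p$-part of $e^{\mathsf D}$ into the classes $d_p^{\alpha}(-\psi_p)^{b-1}$ appearing in $\Omega$. The divisors $C_{h,S}$ require the heart of the computation: I would compute all monomials $C_{h_1,S_1}^{k_1}\cdots C_{h_r,S_r}^{k_r}$, using that a product of distinct $C_{h_j,S_j}$ is supported on a deeper boundary stratum precisely when the bipartitions form a strictly ordered chain $(h_1,S_1)<\cdots<(h_r,S_r)$, and that the self-intersection $C_{h,S}\cdot C_{h,S}$ is governed by the cotangent line at the separating node. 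This is what produces the ordered chains, the combinatorial factors $\prod_j a_{h_j,S_j}^{k_j}/k_j!$, and, upon applying $\pi_\ast$, the decorated boundary classes $\mathbf Z$ and $\mathbf X$.

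The third step is to combine the two expansions and push forward. Multiplying the smooth Todd contribution against the divisorial monomials and applying $\pi_\ast$ (the projection formula reduces everything to fibrewise integrals of $\psi$- and $\kappa$-monomials, i.e.\ to the generators $\mathbf Z$ and $\mathbf X$) assembles $\Omega$; multiplying the node-correction term against the same monomials, and recording the two branch classes with a split exponent $(e,b-2-e)$ together with the alternating sign $(-1)^e$, assembles $\Phi$. The three summands inside $\Phi$ should reflect the three geometric positions the new node can occupy relative to the chain $(h_1,S_1)<\cdots<(h_r,S_r)$: a node strictly deeper than $(h_r,S_r)$ (the $\widetilde{\mathbf X}$ sum over $(l,T)>(h_r,S_r)$), a self-node of the deepest component (the $\widetilde{\mathbf Y}$ term), and a node obtained by merging with the last link of the chain (the $(-1)^{k_r}\widetilde{\mathbf X}$ term, whose sign records the normal direction). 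One then verifies that only the $C_{h,S}$ divisors feed $\Phi$ — neither $\ell\widetilde K_\pi$ nor the $\sigma_p$ do — which is why $\Phi$ carries the classical $B_b$ and the $a_{h,S}$ but neither $\ell$ nor $d_p$.

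The main obstacle is the bookkeeping in the second and third steps: correctly computing the self- and mutual intersections of the $C_{h,S}$ together with their interaction with the node-correction term, and then matching each resulting $\psi$-decorated boundary stratum with the precise generator $\mathbf X$, $\widetilde{\mathbf X}$, $\widetilde{\mathbf Y}$ or $\mathbf Z$ and its exponent vector. In particular, keeping track of the signs $(-1)^{\beta}$, $(-1)^{e}$ and $(-1)^{k_r}$, and of the strict-ordering convention on chains of stable bipartitions so that each stratum is counted exactly once, is where the argument is most error-prone and where the normal-bundle and excess-intersection formulas for the $C_{h,S}$ must be applied with the greatest care.
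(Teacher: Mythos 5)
Your proposal follows essentially the same route as the paper's own proof: Grothendieck--Riemann--Roch applied to $\pi$ \`a la Mumford--Chiodo, with the Todd class split into the smooth part $\widetilde K_\pi/(e^{\widetilde K_\pi}-1)$ (which, against $e^{\ell\widetilde K_\pi}$, produces the Bernoulli polynomials and hence $\Omega$) and a nodal correction supported on the singular locus (which meets only the $C_{h,S}$, producing $\Phi$ with classical Bernoulli numbers), combined with the expansion of $e^{\mathsf D}$ via the mutual/self-intersection formulas for the $C_{h,S}$ along strictly ordered chains and pushforward by the string/dilaton equations. Your identification of the three terms in $\Phi$ (new node deeper than the chain, non-separating self-node, and node merging with the last link, whence $(-1)^{k_r}$) matches the paper's case analysis exactly.
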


Our formula expresses $\ch_t(R^\bullet\pi_\ast\O(\mathsf D))$ as a polynomial of degree $t+1$ in the variables $\ell$, $d_p$, $a_{h,S}$ with coefficients in the tautological ring of $\mbgp$. The special case where all $a_{h,S} = 0$ can be extracted from Chiodo's formula \cite[Theorem~1.1.1]{Chiodo}.
 
 We prove Theorem \ref{MainThm} by applying the Grothendieck--Riemann--Roch formula to the universal curve $\pi$, as in Mumford's seminal calculation of the Chern character of the Hodge bundle \cite[Section~4]{Mumford1983}.
 
The formula in Theorem~\ref{MainThm} has been implemented into the Sage program \cite{admcycles} and is available upon request from the third named author.

\medskip
Our main motivation is computing the pullback of (extended, cohomological) Brill--Noether classes $\mathsf w^r_d$ on the universal Jacobian via the Abel--Jacobi sections. Here we give a preview, full details are in Section \ref{Jacobians}. 

Fix $0 \leq d \leq g-1$ and let $\mathcal{J}_{g,P}^d\ra \mathcal M_{g,P}$ be the universal Jacobian parametrising line bundles of degree $d$ over smooth $P$-pointed curves of genus $g$. Let $\mathscr L$ denote the universal (or Poincar\'e) line bundle on the universal curve 
\[
\widetilde{\pi} \colon \, \mathcal{J}_{g,P}^d \times_{\mathcal{M}_{g,P}} \mathcal{C}_{g,P} \to \mathcal{J}_{g,P}^d.
\]
For $0 \leq r \leq d/2$, the universal Brill--Noether locus $\mathcal W^r_d$ is set-theoretically defined by
\[
\mathcal W^r_d \defeq \Set{(C, P, L) | L\in \Pic^dC,\, h^0(C,L)>r} \subset \mathcal{J}_{g,P}^d,
\]
and can be endowed with the scheme structure of the $(g-d+r$)-th Fitting ideal of $R^1 \widetilde{\pi} _\ast\mathscr L$. Each $\mathcal W^r_d$  is in general not equidimensional, and the dimension of its irreducible components is unknown. However, a cohomological
Brill--Noether class $\mathsf w_d^r$ supported on $\mathcal W^r_d$ and of the expected dimension can be defined, via the Thom--Porteous formula, as the $(r+1)\times (r+1)$ determinant
\begin{equation} \label{wrd}
\mathsf w_d^r = 
\Delta_{g-d+r}^{(r+1)} c (-R^\bullet \widetilde{\pi}_\ast\mathscr L)
\in A^\bullet(\mathcal{J}_{g,P}^d).
\end{equation}
The notation $\Delta^{(p)}_{q}c$ stands for the $p\times p$ determinant $|c_{q+j-i}|$, for $1\leq i,j\leq p$ and a general series $c = \sum_kc_k$ (see Section~\ref{PBBN} for more details).

The discussion of the previous paragraph extends verbatim to $\mbgp$. One constructs a compactified universal Jacobian 
\be\label{JB}
\jbgp(\phi) \to \mbgp
\ee
for all nondegenerate polarisations $\phi$, and classes $\mathsf w^r_d(\phi)$ also defined by Formula~\eqref{wrd}, \emph{mutatis mutandis}. The compactified universal Jacobian \eqref{JB} extends $\mathcal{J}_{g,P}^d\ra \mathcal M_{g,P}$ and consists of torsion free sheaves of rank $1$ on stable curves, whose multidegree is prescribed by $\phi$. The rational sections of \eqref{JB} are called \emph{Abel--Jacobi sections}. By Franchetta's conjecture, they are all of the form
\begin{equation} \label{aj}
\mathsf s \colon \,  (C, P) \mapsto \omega_{\pi}^\ell \left( \sum_{p\in P} d_p \sigma_p + \sum_{h,S} a_{h,S} C_{h,S} \right),
\end{equation}
for some integers $\ell$, $d_p$ and $a_{h,S}$. 

A natural question that has attracted lots of attention is computing the pullback of $\mathsf w^r_d(\phi)$ via the section $\mathsf s$. This problem is complicated by the fact that the latter section  is, in general, only a rational map. Theorem~\ref{MainThm} allows one to compute $\mathsf s^\ast\mathsf w^r_d(\phi)$ for every $\phi$ such that $\mathsf s$ is a morphism (these $\phi$'s are characterised in \cite[Section~6.1]{KassPagani1}). Indeed, for every such $\phi$, we will prove in Corollary~\ref{cor:PB} the equality
\begin{equation} \label{PBsigma}
\mathsf s^*\mathsf w^r_d(\phi) = \Delta_{g-d+r}^{(r+1)}c(-R^\bullet\pi_\ast\O(\mathsf D(\phi))),
\end{equation}
where $\mathsf D(\phi)$ is a modification of a divisor $\mathsf D$ as in \eqref{univdivisor} obtained by replacing the coefficients $a_{h,S}$ with the unique coefficients $a_{h,S}(\phi)$ such that $\mathsf D(\phi)$ is $\phi$-stable on all curves with $1$~node.  Combining~\eqref{PBsigma} with Theorem~\ref{MainThm} and with the inversion formula (see Equation~\eqref{inversion}) for the Chern character, we obtain an explicit expression, for all $\phi$ such that $\mathsf s$ is a morphism, for the cohomology class $\mathsf s^*\mathsf w^r_d(\phi)$ in terms of the standard generators of the tautological ring.

The case $r=d=0$ is related to the problem of extending and calculating the ($\ell$-twisted) \emph{Double Ramification Cycle} --- more details are in Section~\ref{drc2} (see also Example~\ref{drc1}).

\begin{conventions*}
We will work over the field of complex numbers $\mathbb{C}$. If $X$ is a smooth Deligne--Mumford stack, we will denote by $A^{\bullet}(X)$ its Chow ring with rational coefficients. 
\end{conventions*}

\section{Tautological classes}
\label{Notation}

\subsection{Definition of the tautological ring}
Throughout we fix an integer $g \geq 1 $ and a set of markings $P \neq \emptyset$.
We follow the exposition and the notation of \cite[Section~17.4]{ACG} to introduce the \emph{tautological ring} of the moduli space $\overline{\mathcal M}_{g,P}$ of stable $P$-pointed curves of genus $g$.

It is well-known that the universal curve over the moduli stack of stable $P$-pointed curves can be identified with the forgetful morphism from the moduli stack with one extra marking. Throughout we will denote them by
\[
\cbgp= \overline{\mathcal{M}}_{g,P \cup \set{x}} \xrightarrow{\pi} \mbgp,
\]
and we will freely switch from one description to the other.  

For each marking $p\in P$, we let 
\[
\sigma_p\in A^1(\cbgp)
\]
denote the divisor class corresponding to the $p$-th section of $\pi$. Let $\omega_\pi$ be the relative dualising sheaf, and set
\[
K_\pi \defeq c_1\left(\omega_\pi\left(\sum_p\sigma_p\right)\right), \qquad \widetilde K_\pi \defeq c_1(\omega_\pi) = K_\pi-\sum_p\sigma_p.
\]
We define the cotangent line classes by 
\[
\psi_p \defeq \sigma_p^\ast \widetilde{K}_{\pi}\in A^1(\mbgp). 
\]
For $a\geq 0$, we define the \emph{kappa classes}
\[
\kappa_a \defeq \pi_\ast K_\pi^{a+1} \,\in\, A^a(\mbgp).
\]
The \emph{tautological ring} of the moduli space of stable marked curves
\[
R^\bullet (\overline{\mathcal M}_{g,P})\subset A^\bullet (\overline{\mathcal M}_{g,P})
\]
was originally defined by Mumford in \cite[Section 4]{Mumford1983} in the unmarked case $P=\emptyset$ (which is not discussed in this paper), and an elegant definition for all moduli spaces of stable marked curves at once was later given by C.~Faber and R.~Pandharipande \cite{FPtaut}. We will give here an alternative definition to suit our purposes. 

First we recall the notion of decorated boundary stratum class. For $\Gamma=(\mathrm V(\Gamma),\mathrm E(\Gamma),\mathrm L(\Gamma))$ in the set $\mathsf G_{g,P}$ of isomorphism classes of stable $P$-pointed graphs of genus $g$ (see \cite[Chapter XII.10]{ACG} for the precise definition of a stable graph and of the set $\mathsf G_{g,P}$), we let
\[
\overline{\mathcal M}_\Gamma = \prod_{v\in \mathrm V(\Gamma)}\overline{\mathcal M}_{\mathrm g_v,\mathrm P_v}
\]
and denote by $\xi_\Gamma \colon \, \overline{\mathcal M}_\Gamma\ra \overline{\mathcal M}_{g,P}$ the associated clutching morphism. Here, $\mathrm P_v$ is the set of half-edges and legs issuing from the vertex $v$, and we require that the stability condition $2\mathrm g_v-2+|\mathrm P_v|\,\,>0$ is fulfilled for all vertices $v$. A ``decoration'' $\theta = (\theta_v)_v$ on the graph $\Gamma$ is the datum of a monomial
\[
\theta_v=\prod_{p \in \mathrm P_v} \psi_p^{a_p} \prod_{j} \kappa_j^{b_j}\in A^\bullet(\overline{\mathcal M}_{\mathrm g_v,\mathrm P_v})
\]
for each vertex $v\in \mathrm V(\Gamma)$.
Classes of the form 
\[
\frac{1}{|\Aut \Gamma|} \xi_{\Gamma\ast}\left(\prod_{v \in \mathrm{V}(\Gamma)} \theta_v \right) \in A^\bullet(\mbgp),
\] 
for $\Gamma$ and $\theta$ as above, are called \emph{decorated boundary strata classes}. (Here and in the following, we omit writing the pullback via the projection map to the factor, and we omit writing the tensor product of classes, unless that helps identifying which factor they are pulled back from). We define $R^\bullet (\overline{\mathcal M}_{g,P})$ to be the vector subspace of  $A^\bullet(\overline{\mathcal M}_{g,P})$ generated by these classes and then endow it with the intersection product. When $\theta_v$ is trivial for all $v$, we simply write $\delta_\Gamma \defeq \xi_{\Gamma\ast}(\mathbb 1)/|\Aut \Gamma|$.

The collection of decorated boundary strata classes can be made into a finite set (for fixed $g$ and $P$) by only considering decorations $\theta$ that are not obviously vanishing for degree reasons. Even so, this collection is far from being a basis. All known relations among these generators belong to a vector space generated by the so-called \emph{Pixton's relations}, see \cite{PPZ} and \cite{Janda}, but whether or not these are all the existing relations is so far unknown.

In this paper, ``calculating'' an element of $R^\bullet (\overline{\mathcal M}_{g,P})$  always means expressing it as an explicit, non-unique, linear combination of decorated boundary strata classes.   
We will often use graph notation for these  classes; 
for example we will denote by 
\begin{center}
    \begin{tikzpicture}[->,>=bad to,baseline=-3pt,node distance=1.3cm,thick,main node/.style={circle,draw,font=\Large,scale=0.5}]
\node at (0,0) [main node] (A) {$3$};
\node at (1,0) [main node] (B) {$1 $};
\node at (2,0) [main node] (C) {$2$};
\node at (0,-.7)  (As) {$S$};
\node at (1,-.7)  (Bs) {$T $};
\node at (.3,.4) (p) {\tiny{$(i)$}};
\node at (2.3,.3) (p) {\tiny{$\kappa_a$}};
\draw [<<-] (A) to (B);
\draw [-] (B) to (C);
\draw [-] (A) to (As);
\draw [-] (B) to (Bs);
\end{tikzpicture}
\end{center}
the class  $\xi_{\Gamma * } (\psi_{p_1}^i\otimes \mathbb{1} \otimes \kappa_a)$, where $\xi_\Gamma$ is the clutching morphism \[\M_{3,S\cup \{p_1\}} \times \M_{1,T\cup \{p_2,p_3\}} \times \M_{2,\{p_4\}}\longrightarrow \M_{6,S\cup T}\] which glues $p_1$ to $p_2$ and $p_3$ to $p_4$.

\subsection{Boundary divisors}
Here we discuss and fix some convention for the particular case of the tautological classes that correspond to boundary divisors.

\begin{definition}\label{Ex:stablebip}
We define the set of \emph{stable bipartitions} of $(g,P)$ to be the collection of pairs $(h,S)$ where $S\subseteq P$ is a subset of the set of markings, and $0\leq h\leq g$ is such that if $h=0$ then $|S|\,\, \geq 2$ and if $h=g$ then $|S^c|\,\, \geq 2$ (where $S^c=P\setminus S$ denotes the complement). 
\end{definition}

We also make the following:

\begin{convention}\label{convention_1isinS}
We assume that for every stable bipartition $(h,S)$, the set $S$ contains a distinguished marking $1\in P$. (In particular, $S$ is never empty.)
\end{convention}
With this convention, there is a bijection between the set of stable bipartitions and the set of stable graphs $\Gamma_{h,S}\in \mathsf G_{g,P}$ with two vertices and one edge.

The (codimension one) clutching morphism corresponding to $\Gamma_{h,S}$ is denoted
\[
\xi_{h,S}\colon \, \overline{\mathcal M}_{h,S\cup \set{q}}\times \overline{\mathcal M}_{g-h,S^c\cup \set{r}}\ra \mbgp.
\]
Its image is the boundary divisor $\Delta_{h,S}$ and its class $\delta_{\Gamma_{h,S}}$ will simply be denoted by $\delta_{h,S}$.

There is one more boundary divisor of $\mbgp$, which parametrises irreducible singular curves. That divisor is the image of the clutching morphism $\xi_{\text{irr}}$ that corresponds to the stable graph $\Gamma_{\text{irr}}$ consisting of one vertex of genus $g-1$ with a loop and with all markings $P$. 

\begin{definition}\label{definition_CHS}
For a fixed stable bipartition $(h,S)$ of $(g,P)$, the inverse image $\pi^{-1}(\Delta_{h,S})$ in the universal curve $\cbgp$ consists of two irreducible components. We will denote by $C^+_{h,S}$ the class of the component that contains the moving point $x$ on the universal curve, and by $C_{h,S}$ the class of the other component, see Figure \ref{fig:chs}.
\end{definition}

\begin{figure}[!h]
    \centering
\begin{tikzpicture}[->,>=bad to,baseline=-3pt,node distance=1.3cm,thick,main node/.style={circle,draw,font=\Large,scale=0.5}]
\node at (0,0) [main node] (A) {$h$};
\node at (2,0) [main node] (B) {$g-h$};
\node at (0,-1)  (As) {$S$};
\node at (2,-1)  (Bs) {$\left(P \setminus S\right) \cup \set{x}$};
\draw [-] (A) to (B);
\draw [-] (A) to (As);
\draw [-] (B) to (Bs);
\end{tikzpicture}
\qquad 
\begin{tikzpicture}[->,>=bad to,baseline=-3pt,node distance=1.3cm,thick,main node/.style={circle,draw,font=\Large,scale=0.5}]
\node at (0,0) [main node] (A) {$h$};
\node at (2,0) [main node] (B) {$g-h$};
\node at (0,-1)  (As) {$S\cup \set{x}$};
\node at (2,-1)  (Bs) {$P\setminus S $};
\draw [-] (A) to (B);
\draw [-] (A) to (As);
\draw [-] (B) to (Bs);
\end{tikzpicture}
    \caption{After identifying $\cbgp$ with $\overline{\mathcal M}_{g,P\cup \set{x}}$, the divisor class $C_{h,S}$ (resp.~$C_{h,S}^+$) corresponds to the stable $(P\cup \set{x})$-pointed graph depicted on the left (resp.~on the right).}
    \label{fig:chs}
\end{figure}
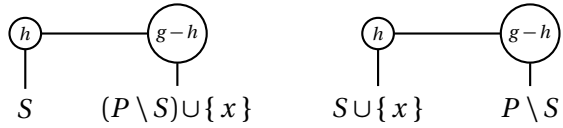

\subsection{Products of components on the universal curve}

In this section we compute the product of components $C_{h,S}$ in the Chow ring of the universal curve $\cbgp$.  This will motivate introducing the notation that appears in our main formula, Theorem~\ref{MainThm}. That notation will be first used in the following section.

Recall that by Convention \ref{convention_1isinS} every subset $S\subseteq P$ contains $1$.
We define a \emph{partial ordering on the stable bipartitions $(h,S)$} by setting 
\be\label{def:partial_order}
(h_1,S_1)\leq (h_2,S_2) \textrm{ if and only if } h_1\leq h_2 \textrm{ and } S_1\subseteq S_2.
\ee

\begin{notation} \label{firstnotation}
 For $r>0$ and a strictly ordered chain of stable bipartitions $(h_1,S_1) < \cdots < (h_r,S_r)$ and for nonnegative indices $i_1, \ldots, i_r$ and $j_1, \ldots, j_r$, we define the class
\[
C_{(h_1,\ldots,h_r),(S_1,\ldots,S_r)}^{(i_1,j_1),\ldots,(i_r,j_r)}\defeq  \;
 \begin{tikzpicture}[->,>=bad to,baseline=-3pt,node distance=1.3cm,thick,main node/.style={circle,draw,font=\Large,scale=0.5}]
\node at (0,0) [main node] (A) {$h_1$};
\node at (2,0) [main node] (B) {$h_2-h_1$};
\node at (4,0)  (C) {...};
\node at (6,0) [main node] (D) {$h_{r}-h_{r-1}$};
\node at (9,0) [main node] (E) {$g-h_r$};
\node at (0,-1.4)  (As) {$S_1$};
\node at (2,-1.4)  (Bs) {$S_2\setminus S_1$};
\node at (6,-1.4)  (Ds) {$S_r\setminus S_{r-1}$};
\node at (9,-1.4)  (Es) {$(P\setminus S_r) \cup \set{x}$};
\node at (.4,.5) (a) {\tiny $(i_1)$};
\node at (1.4,.5) (b1) {\tiny $(j_1)$};
\node at (2.6,.5) (b2) {\tiny $(i_2)$};
\node at (5.2,.5) (d1) {\tiny $(j_{r-1})$};
\node at (6.6,.5) (d2) {\tiny $(i_r)$};
\node at (8.4,.5) (e) {\tiny $(j_{r})$};
\draw [<<->>] (A) to (B);
\draw [<<-] (B) to (C);
\draw [->>] (C) to (D);
\draw [<<->>] (D) to (E);
\draw [-] (A) to (As);
\draw [-] (B) to (Bs);
\draw [-] (D) to (Ds);
\draw [-] (E) to (Es);
\end{tikzpicture}
\]
 in $R^\bullet(\M_{g,P\cup\{x\}}) = R^\bullet(\overline{\mathcal{C}}_{g,P})$.

With the same notation as above, we also define the classes
\[
X_{(h_1,\ldots,h_r),(S_1,\ldots,S_r)}^{(i_1,j_1),\ldots,(i_r,j_r)}\defeq  \;
 \begin{tikzpicture}[->,>=bad to,baseline=-3pt,node distance=1.3cm,thick,main node/.style={circle,draw,font=\Large,scale=0.5}]
\node at (0,0) [main node] (A) {$h_1$};
\node at (2,0) [main node] (B) {$h_2-h_1$};
\node at (4,0)  (C) {...};
\node at (6,0) [main node] (D) {$h_{r}-h_{r-1}$};
\node at (9,0) [main node] (E) {$g-h_r$};
\node at (0,-1.4)  (As) {$S_1$};
\node at (2,-1.4)  (Bs) {$S_2\setminus S_1$};
\node at (6,-1.4)  (Ds) {$S_r\setminus S_{r-1}$};
\node at (9,-1.4)  (Es) {$P\setminus S_r$};
\node at (.4,.5) (a) {\tiny $(i_1)$};
\node at (1.4,.5) (b1) {\tiny $(j_1)$};
\node at (2.6,.5) (b2) {\tiny $(i_2)$};
\node at (5.2,.5) (d1) {\tiny $(j_{r-1})$};
\node at (6.7,.5) (d2) {\tiny $(i_r)$};
\node at (8.4,.5) (e) {\tiny $(j_{r})$};
\draw [<<->>] (A) to (B);
\draw [<<-] (B) to (C);
\draw [->>] (C) to (D);
\draw [<<->>] (D) to (E);
\draw [-] (A) to (As);
\draw [-] (B) to (Bs);
\draw [-] (D) to (Ds);
\draw [-] (E) to (Es);
\end{tikzpicture}
\]
\[
Y_{(h_1,\ldots,h_r),(S_1,\ldots,S_r)}^{(i_1,j_1),\ldots,(i_{r+1},j_{r+1})}\defeq  \;
 \begin{tikzpicture}[->,>=bad to,baseline=-3pt,node distance=1.3cm,thick,main node/.style={circle,draw,font=\Large,scale=0.5}]
\node at (0,0) [main node] (A) {$h_1$};
\node at (2,0) [main node] (B) {$h_2-h_1$};
\node at (4,0)  (C) {...};
\node at (6,0) [main node] (D) {$h_{r}-h_{r-1}$};
\node at (9,0) [main node] (E) {$g-h_r-1$};
\node at (0,-1.4)  (As) {$S_1$};
\node at (2,-1.4)  (Bs) {$S_2\setminus S_1$};
\node at (6,-1.4)  (Ds) {$S_r\setminus S_{r-1}$};
\node at (9,-1.4)  (Es) {$P\setminus S_r$};
\node at (.4,.5) (a) {\tiny $(i_1)$};
\node at (1.4,.5) (b1) {\tiny $(j_1)$};
\node at (2.6,.5) (b2) {\tiny $(i_2)$};
\node at (5.2,.5) (d1) {\tiny $(j_{r-1})$};
\node at (6.7,.5) (d2) {\tiny $(i_r)$};
\node at (8.2,.5) (e) {\tiny $(j_{r})$};
\node at (9.6,.8) (e) {\tiny $(i_{r+1})$};
\node at (9.6,-.8) (e) {\tiny $(j_{r+1})$};
\draw [<<->>] (A) to (B);
\draw [<<-] (B) to (C);
\draw [->>] (C) to (D);
\draw [<<->>] (D) to (E);
\draw [<<->>] (E) to [out=30, in=-30,looseness=5] (E);
\draw [-] (A) to (As);
\draw [-] (B) to (Bs);
\draw [-] (D) to (Ds);
\draw [-] (E) to (Es);
\end{tikzpicture}
\]
\[
Z_{(h_1,\ldots,h_r),(S_1,\ldots,S_r)}^{(i_1,j_1),\ldots,(i_r,j_r),b}\defeq  \;
 \begin{tikzpicture}[->,>=bad to,baseline=-3pt,node distance=1.3cm,thick,main node/.style={circle,draw,font=\Large,scale=0.5}]
\node at (0,0) [main node] (A) {$h_1$};
\node at (2,0) [main node] (B) {$h_2-h_1$};
\node at (4,0)  (C) {...};
\node at (6,0) [main node] (D) {$h_{r}-h_{r-1}$};
\node at (9,0) [main node] (E) {$g-h_r$};
\node at (0,-1.4)  (As) {$S_1$};
\node at (2,-1.4)  (Bs) {$S_2\setminus S_1$};
\node at (6,-1.4)  (Ds) {$S_r\setminus S_{r-1}$};
\node at (9,-1.4)  (Es) {$P\setminus S_r$};
\node at (.4,.5) (a) {\tiny $(i_1)$};
\node at (1.4,.5) (b1) {\tiny $(j_1)$};
\node at (2.6,.5) (b2) {\tiny $(i_2)$};
\node at (5.2,.5) (d1) {\tiny $(j_{r-1})$};
\node at (6.7,.5) (d2) {\tiny $(i_r)$};
\node at (8.4,.5) (e) {\tiny $(j_{r})$};
\node at (9.4,.5) (e) {\footnotesize $\kappa_{b}$};
\draw [<<->>] (A) to (B);
\draw [<<-] (B) to (C);
\draw [->>] (C) to (D);
\draw [<<->>] (D) to (E);
\draw [-] (A) to (As);
\draw [-] (B) to (Bs);
\draw [-] (D) to (Ds);
\draw [-] (E) to (Es);
\end{tikzpicture}
\]
in $R^\bullet(\M_{g,P})$.
For later convenience, we allow $b \geq -1$ and we fix the convention that 
\be\label{Psi_Shift}
\kappa_{-1} \psi^t \defeq \psi^{t-1},\quad \psi^{-1}=0.
\ee
\end{notation}

The classes that appear in Theorem~\ref{MainThm} are those introduced in the previous notation, with a suitable choice of indices, and a suitable coefficient, as described in the following.
\begin{notation}\label{notation_tautological}
Let $r \geq 0$, $b\geq -1$ and $k_1,\ldots,k_r>0$ be integers, and let $(h_1,S_1)~<~\cdots~<~(h_r,S_r)$ be an ordered chain of stable bipartitions. Set $\mathbf{h}_r=(h_1,\ldots,h_r)$, $\mathbf{S}_r=(S_1,\ldots,S_r)$ and finally $\mathbf{k}_r~=~(k_1,\ldots,k_r)$. We define the codimension $\sum_{1\leq a \leq r} k_a$, resp. $b + \sum_{1\leq a \leq r} k_a$ classes:
\begin{align*}
\mathbf{X}_{(\mathbf{h}_r),(\mathbf{S}_r)}^{\mathbf{k}_r}
&\defeq  \begin{cases} 
1 &\textrm{ when } r=0,\\
\displaystyle \sum_{\substack{ 0\leq i_1 \leq k_1-1\\ \cdots \\ 0\leq i_r \leq k_r-1 }} \left(\prod_{j=1}^{r}(-1)^{k_j-1}\binom{k_j-1}{i_j}\right) X_{(h_1,\ldots,h_r),(S_1,\ldots,S_r)}^{(i_1,k_1-1-i_1),\ldots,(i_r,k_r-1-i_r)} & \textrm{ when } r>0,
\end{cases}
\\
\mathbf{Z}_{(\mathbf{h}_r),(\mathbf{S}_r)}^{\mathbf{k}_r,b}
&\defeq \begin{cases}
\kappa_b & \textrm{ when } r=0,\\
\displaystyle \sum_{\substack{ 0\leq i_1 \leq k_1-1\\ \cdots \\ 0\leq i_r \leq k_r-1 }}\left( \prod_{j=1}^{r}(-1)^{k_j-1}\binom{k_j-1}{i_j}\right) Z_{(h_1,\ldots,h_r),(S_1,\ldots,S_r)}^{(i_1,k_1-1-i_1),\ldots,(i_r,k_r-1-i_r),b} & \textrm{ when } r>0, \\  \displaystyle
\sum_{\substack{ 0\leq i_1 \leq k_1-1\\ \cdots \\ 0\leq i_r \leq k_r-2 }} \left(\prod_{j=1}^{r}(-1)^{k_j-1}\binom{k_j-1}{i_j}\right) X_{(h_1,\ldots,h_r),(S_1,\ldots,S_r)}^{(i_1,k_1-1-i_1),\ldots,(i_{r},k_{r}-2-i_{r})} & \textrm{when }b = -1.
\end{cases}
\end{align*}
Given additional integers $i$ and $j$, we define the codimension $i+j+1+\sum_{1\leq a \leq r} k_a $ class:
\begin{align*}
\widetilde{\mathbf{Y}}_{(\mathbf{h}_r),(\mathbf{S}_r)}^{\mathbf{k}_r,(i,j)}
&\defeq \begin{cases}
     \begin{tikzpicture}[->,>=bad to,baseline=-3pt,node distance=1.3cm,thick,main node/.style={circle,draw,font=\Large,scale=0.5}]
\node at (0,0) [main node] (A) {$g-1$};
\node at (.6,.5) (e) {\tiny $(i)$};
\node at (.6,-.5) (e) {\tiny $(j)$};
\draw [<<->>] (A) to [out=30, in=-30,looseness=5] (A);
\end{tikzpicture} & \textrm{ when } r=0,\\
\displaystyle
\sum_{\substack{ 0\leq i_1 \leq k_1-1\\ \cdots \\ 0\leq i_r \leq k_r-1 }}\left( \prod_{j=1}^{r}(-1)^{k_j-1}\binom{k_j-1}{i_j}\right) Y_{(h_1,\ldots,h_r),(S_1,\ldots,S_r)}^{(i_1,k_1-1-i_1),\ldots,(i_r,k_r-1-i_r),(i,j)} & \textrm{ when } r>0.
\end{cases}
\end{align*}
Finally, given a further stable bipartition $(h_{r+1},S_{r+1})>(h_r,S_r)$ as well, we define the codimension $i+j+1+\sum_{1\leq a \leq r} k_a $ class:
\begin{align*}
\widetilde{\mathbf{X}}_{(\mathbf{h}_r,h_{r+1}),(\mathbf{S}_r,S_{r+1})}^{\mathbf{k}_r,(i,j)}
&\defeq  \begin{cases}
\begin{tikzpicture}[->,>=bad to,baseline=-3pt,node distance=1.3cm,thick,main node/.style={circle,draw,font=\Large,scale=0.5}]
\node at (0,0) [main node] (A) {$h_1$};
\node at (2,0) [main node] (B) {$g-h_1$};
\node at (0,-1)  (As) {$S_1$};
\node at (2,-1)  (Bs) {$P\setminus S_1$};
\node at (.4,.4) (a) {\tiny $(i)$};
\node at (1.4,.4) (b1) {\tiny $(j)$};
\draw [<<->>] (A) to (B);
\draw [-] (A) to (As);
\draw [-] (B) to (Bs);
\end{tikzpicture} & \textrm{ when } r=0,\\
\displaystyle
\sum_{\substack{ 0\leq i_1 \leq k_1-1\\ \cdots \\ 0\leq i_r \leq k_r-1 }} \left(\prod_{j=1}^{r}(-1)^{k_j-1}\binom{k_j-1}{i_j}\right) X_{(h_1,\ldots,h_r,h_{r+1}),(S_1,\ldots,S_r,S_{r+1})}^{(i_1,k_1-1-i_1),\ldots,(i_r,k_r-1-i_r),(i,j)} & \textrm{ when } r>0.
\end{cases}
\end{align*}
For uniformity of notation in sums, it will be convenient to define the latter tautological class $\widetilde{\mathbf{X}}$ even when the index $r$ equals $-1$. In this case we set that class to equal zero. 
\end{notation}

The motivation for introducing the tautological classes described above will become  clear when in Section~\ref{formulazza} we will prove Theorem~\ref{MainThm}, but the reason to package the coefficients the way we did  already appears in the following lemma.

\begin{lemma}\label{lem:ChSprod}
 If $(h_1,S_1) \nleq (h_2,S_2)$ and $(h_2,S_2)\nleq (h_1,S_1)$ then \[C_{h_1,S_1}\cdot C_{h_2,S_2}=0.\] 
 Let $r>0$ and assume  $(h_1,S_1)< \cdots < (h_r,S_r)$ is a strictly ordered chain, and $k_1,\ldots,k_r>0$ are integers. Then we have 
 \[
\prod_{j=1}^rC_{h_j,S_j}^{k_j}=
 \sum_{\substack{ 0\leq i_1 \leq k_1-1\\ \cdots \\ 0\leq i_r \leq k_r-1 }}  \left( \prod_{j=1}^r (-1)^{ (k_j-1)} \binom{k_j-1}{i_j}\right) C_{(h_1,\ldots,h_r),(S_1,\ldots,S_r)}^{(i_1,k_1-1-i_1)\ldots(i_r,k_r-1-i_r)}.
 \]
\end{lemma}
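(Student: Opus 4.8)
The plan is to reinterpret the classes $C_{h,S}$ as honest boundary divisors on $\cbgp=\M_{g,P\cup\{x\}}$ and to carry out the whole computation inside the intersection theory of boundary strata. By Definition~\ref{definition_CHS} and Figure~\ref{fig:chs}, $C_{h,S}=\delta_{\Gamma}$ is the boundary divisor attached to the one-edge $(P\cup\{x\})$-stable graph with a genus-$h$ vertex carrying $S$ and a genus-$(g-h)$ vertex carrying $(P\setminus S)\cup\{x\}$; I write $\xi^x_{h,S}$ for its gluing map, with node half-edges $q$ on the genus-$h$ side and $r$ on the other side. Two standard facts will do all the work: the normal-bundle (self-intersection) identity $(\xi^x_{h,S})^\ast C_{h,S}=-\psi_q-\psi_r$, and the pullback formula for a boundary divisor along a gluing morphism, which expresses $\xi_\Gamma^\ast\delta_{\Gamma'}$ as a sum of terms $-\psi-\psi$, one for each edge of $\Gamma$ of type $\Gamma'$, together with boundary divisors of $\M_\Gamma$ along which a new node of type $\Gamma'$ appears. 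All graphs occurring below (one-edge and chains) are automorphism-free, their two ends being distinguished by $x$ and by the marking $1$, so the decorated classes of Notation~\ref{firstnotation} are literally pushforwards of $\psi$-monomials.

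For the vanishing statement I would argue by supports: $C_{h_1,S_1}$ and $C_{h_2,S_2}$ are distinct prime boundary divisors, so their product vanishes once they are disjoint, i.e. once no connected stable curve carries separating nodes of both types. Two separating nodes on a connected curve form a laminar family: contracting everything outside the path joining them exhibits three pieces in a line $A-B-C$, so the two sides not containing $x$ are nested unless $x$ lies in the middle piece $B$, in which case they are the disjoint end pieces $A$ and $C$. Nestedness is exactly comparability of $(h_1,S_1)$ and $(h_2,S_2)$ (genus is additive across a separating node, hence monotone under nesting), while the disjoint case is impossible because, by Convention~\ref{convention_1isinS}, both $S_1$ and $S_2$ contain the marking $1$, which cannot lie in two disjoint pieces. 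Thus incomparability forces disjoint supports and $C_{h_1,S_1}\cdot C_{h_2,S_2}=0$.

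For the product formula I would induct on $r$. The base case $r=1$ is the iterated self-intersection: combining the normal-bundle identity with the projection formula gives $C_{h,S}^{k}=(\xi^x_{h,S})_\ast\big((-\psi_q-\psi_r)^{k-1}\big)$, and expanding the binomial produces $(-1)^{k-1}\sum_i\binom{k-1}{i}C^{(i,k-1-i)}_{(h),(S)}$, as claimed. For the inductive step I would write $\prod_{j=1}^r C_{h_j,S_j}^{k_j}=\big(\prod_{j=1}^{r-1}C_{h_j,S_j}^{k_j}\big)\cdot C_{h_r,S_r}^{k_r}$, apply the inductive hypothesis to rewrite the first factor as a sum of classes $\xi_{\Gamma'\ast}(\theta')$ over the length-$(r-1)$ chain graph $\Gamma'$, and use the projection formula to reduce to $\xi_{\Gamma'}^\ast\big(C_{h_r,S_r}^{k_r}\big)=\big(\xi_{\Gamma'}^\ast C_{h_r,S_r}\big)^{k_r}$. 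The heart of the argument is the claim that $\xi_{\Gamma'}^\ast C_{h_r,S_r}$ is a single reduced boundary divisor of $\M_{\Gamma'}$, namely the one degenerating the last vertex $v_{r-1}$ (genus $g-h_{r-1}$, markings $(P\setminus S_{r-1})\cup\{x\}$) along the node of type ``genus $h_r-h_{r-1}$, markings $S_r\setminus S_{r-1}$ on the side of the edge to $v_{r-2}$'', thereby completing $\Gamma'$ to the length-$r$ chain $\Gamma$. Granting this, $\xi_{\Gamma'}^\ast\big(C_{h_r,S_r}^{k_r}\big)$ is the $k_r$-th self-intersection of that divisor, to which the base case applies on the $v_{r-1}$-factor and contributes $(-\psi_{q_r}-\psi_{r_r})^{k_r-1}$ at the new edge---disjoint from the $\psi$-decoration $\theta'$ at the old edges---so that $\xi_{\Gamma'\ast}$ reassembles everything into the decorated length-$r$ chain class, and expanding the two binomials reproduces the stated coefficients.

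The main obstacle is precisely this crux: showing $\xi_{\Gamma'}^\ast C_{h_r,S_r}=\delta_{\mathrm{new}}$ with multiplicity one and no stray terms. There are no $-\psi-\psi$ edge contributions, because every edge of $\Gamma'$ has type $(h_\ell,S_\ell)$ with $\ell<r$, all distinct from $(h_r,S_r)$; and among the vertex degenerations exactly one creates an $(h_r,S_r)$-node. This uniqueness is where the hypothesis $(h_r,S_r)>(h_{r-1},S_{r-1})$ and the positions of $x$ and of the marking $1$ enter: the $S_r$-side must contain all of $v_0,\dots,v_{r-2}$ (they carry the markings $S_{r-1}\subseteq S_r$), and since $x$ sits on $v_{r-1}$ the only freedom is a single split of $v_{r-1}$, while degenerating any earlier vertex can only yield a bipartition $(h',S')$ with $h'\le h_{r-1}$ and $S'\subseteq S_{r-1}$, never $(h_r,S_r)$. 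Transversality of the distinct boundary divisors at the generic point of $\Gamma$ gives multiplicity one. Carrying out this bookkeeping---and checking that the surviving decoration $\psi_{r_{r-1}}$ does not collide with the half-edges of the new node---is the only delicate step; everything else is binomial algebra.
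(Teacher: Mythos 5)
Your proof is correct and follows essentially the same route as the paper: the vanishing rests on the same combinatorial analysis of two separating nodes (the side carrying the marking $1$ versus the side carrying the moving point $x$), and the product formula on iterating the self-intersection identity $C_{h,S}^{k}=\xi_{h,S\,*}\bigl((-\psi_\bullet-\psi_\star)^{k-1}\bigr)$ within the Graber--Pandharipande boundary-intersection formalism. The differences are presentational only: you phrase the vanishing via disjoint supports and make the iteration an explicit induction on $r$ with a careful uniqueness claim for $\xi_{\Gamma'}^{\ast}C_{h_r,S_r}$, where the paper invokes the Graber--Pandharipande product expansion and compresses the induction into ``repeatedly applying the same procedure.''
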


\begin{proof}
This is a direct computation using \cite[Appendix]{graberpanda} where we use our convention that $S_i$ always contains the marking $1$ and $S^c_i\defeq \left( P \cup \set{x}\right) \setminus S_i$ always contains the extra marking $x$ coming from the identification $\overline{\mathcal{C}}_{g,P}=\mathcal{\overline{M}}_{g,P\cup\{x\}}$.

Let $G_{i}$  be the graph associated to $C_{h_i,S_i}$. The intersection $C_{h_1,S_1}\cdot C_{h_2,S_2}$ is the sum of all graphs $G$ with $2$ edges $e_1$ and $e_2$, such that contracting the edges $e_i$ gives the graph $G_i$. The genus of the vertex $v$ of $G$ with the marking $1$ has to equal $\min(h_1,h_2)$ and its markings have to be $S_1\cap S_2$. Since both the edges $e_1$ and $e_2$ separate the markings $1$ and $x$ only one of these edges can be incident to $v$. Contracting the edge $e_i$ \emph{not} incident to $v$ can only produce the graph $G_i$ associated to  $C_{h_i,S_i}$ if $h_i=\min(h_1,h_2)$ and $S_i=S_1\cap S_2$. This proves the first part of the Lemma.
    \begin{figure}[h] 
\begin{tikzpicture}[->,>=bad to,baseline=-3pt,node distance=1.3cm,thick,main node/.style={circle,draw,font=\Large,scale=0.5}]
\node at (0,0) [main node] (A) {$\min(h_1,h_2)$};
\node at (3,0) [main node] (B) {$\mid \! h_2-h_1 \!\mid $};
\node at (7,0) [main node] (C) {$g-\max(h_1,h_2)$};
\node at (0,-1.4)  (As) {$S_1\cap S_2$};
\node at (3,-1.4)  (Bs) {$(S_1\cup S_2)\setminus (S_1\cap S_2) $};
\node at (7,-1.4)  (Cs) {$(P\cup \{x\})\setminus (S_1\cup S_2)$};
\node at (-.5,.8) (v) {$v$};
\node at (5,.4) (e) {$e_i$};
\draw [-] (A) to (B);
\draw [-] (B) to (C);
\draw [-] (A) to (As);
\draw [-] (B) to (Bs);
\draw [-] (C) to (Cs);
\end{tikzpicture}
\caption{Graphs $G$ contracting generically to $G_1$ and $G_2$.}
\end{figure}
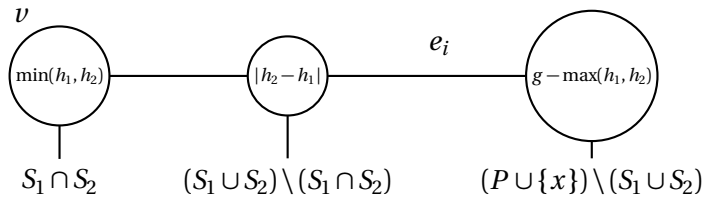

The second part of the statement follows from repeatedly applying the same procedure together with the fact that $C_{h_i,S_i}^{k_i}= \xi_{h_i,S_i,\,*}\left(-\psi_\bullet -\psi_\star\right)^{k_i-1}$, where $\bullet$ and $\star$ are the half edges associated to the edge of $C_{h_i,S_i}$.
\end{proof}

To conclude this section, we compute the pushforward of the classes of the previous lemma under the forgetful morphism $\pi\colon \, \M_{g,P\cup \{x\}}\to \M_{g,P}$. This will be key to Section~\ref{PFproddiv}.
\begin{lemma}\label{lemma:PF_C}
Let $r>0$ and assume $(h_1,S_1)< \cdots < (h_r,S_r)$ is a strictly ordered chain of stable bipartitions. Let $i_1,\ldots,i_r$, $j_1,\ldots,j_r$ be non-negative integers. We have
 \[
 \pi_*\left(C_{(h_1,\ldots,h_r),(S_1,\ldots,S_r)}^{(i_1,j_1),\ldots,(i_r,j_r)}\right)=
 \begin{cases}
 0 & j_r=0\\
 X_{(h_1,\ldots,h_r),(S_1,\ldots,S_r)}^{(i_1,j_1),\ldots,(i_r,j_r-1)}  & j_r>0.
 \end{cases}
 \]
\end{lemma}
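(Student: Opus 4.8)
The plan is to exploit that the extra marking $x$ sits on the last vertex of the graph defining $C_{(h_1,\ldots,h_r),(S_1,\ldots,S_r)}^{(i_1,j_1),\ldots,(i_r,j_r)}$, so that the forgetful morphism $\pi$ only affects that vertex and can be analysed factor by factor. Write $\Gamma$ for the stable $(P\cup\{x\})$-graph defining this class and $\Gamma'$ for the stable $P$-graph defining $X_{(h_1,\ldots,h_r),(S_1,\ldots,S_r)}^{(i_1,j_1),\ldots,(i_r,j_r-1)}$. The two graphs are identical except that the last vertex of $\Gamma$, namely $\M_{g-h_r,(P\setminus S_r)\cup\{x,\star\}}$ with $\star$ the half-edge of the last edge, loses the leg $x$ in $\Gamma'$ and becomes $\M_{g-h_r,(P\setminus S_r)\cup\{\star\}}$. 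Since $(h_r,S_r)$ is a stable bipartition of $(g,P)$, Definition~\ref{Ex:stablebip} guarantees that forgetting $x$ does not destabilise this vertex (when $h_r=g$ one has $|P\setminus S_r|\geq 2$). Consequently forgetting $x$ commutes with clutching: there is a forgetful morphism $p\colon\M_\Gamma\to\M_{\Gamma'}$, equal to the identity on every factor but the last, where it forgets $x$, and $\pi\circ\xi_\Gamma=\xi_{\Gamma'}\circ p$.

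First I would reduce the pushforward to the last factor. By this commutativity and the projection formula,
\[
\pi_*\, C_{(h_1,\ldots,h_r),(S_1,\ldots,S_r)}^{(i_1,j_1),\ldots,(i_r,j_r)} = \pi_*\xi_{\Gamma*}(\theta) = \xi_{\Gamma'*}\,(p_*\theta),
\]
where $\theta$ is the product of the $\psi$-decorations. Because every decoration other than $\psi_\star^{j_r}$ lives on the vertices $v_1,\dots,v_r$, which $p$ leaves untouched, I would factor $\theta$ as the (external) product of a class $\theta'$ pulled back from $\M_{\Gamma'}$ and the class $\psi_\star^{j_r}$ on the last factor; the projection formula then gives $p_*\theta=\theta'\cdot p'_*(\psi_\star^{j_r})$, where $p'$ is the forgetful map on the last vertex alone.

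Next I would compute $p'_*(\psi_\star^{j_r})$ via the standard comparison of cotangent classes. Writing $\psi_\star=p'^*\bar\psi_\star+D_{\star,x}$, where $D_{\star,x}$ is the (section) divisor along which $x$ and $\star$ lie on a common rational bubble, one has $\psi_\star\cdot D_{\star,x}=0$ (as $\psi_\star$ restricts to zero on a three-pointed rational component) and $p'_*D_{\star,x}=1$. These identities yield $\psi_\star^{a}=\psi_\star\,(p'^*\bar\psi_\star)^{a-1}$ for $a\geq 1$, hence $p'_*(\psi_\star^{a})=\bar\psi_\star^{\,a-1}$ for $a\geq 1$ and $p'_*(1)=0$ (the fibre dimension being positive). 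Applying this with $a=j_r$ kills the class when $j_r=0$ and replaces $\psi_\star^{j_r}$ by $\bar\psi_\star^{\,j_r-1}$ when $j_r>0$; reassembling through $\xi_{\Gamma'*}$ produces exactly $0$, respectively $X_{(h_1,\ldots,h_r),(S_1,\ldots,S_r)}^{(i_1,j_1),\ldots,(i_r,j_r-1)}$. The automorphism normalisations match because the chain graphs $\Gamma$ and $\Gamma'$, having labelled legs, distinct vertices and no parallel edges, carry no nontrivial automorphisms.

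The main obstacle I anticipate is the careful justification of the modular compatibility $\pi\circ\xi_\Gamma=\xi_{\Gamma'}\circ p$: one must verify that forgetting $x$ never contracts the last component (which is precisely where the stability condition of Definition~\ref{Ex:stablebip} is used) and that the stabilisation produces no spurious boundary contributions. Once this square is established, the remainder is the routine single-$\psi$ forgetful-pushforward computation carried out above.
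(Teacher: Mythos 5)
Your proposal is correct and takes essentially the same route as the paper: the paper's one-line proof reduces the pushforward to the vertex carrying $x$ and invokes the String Equation, which is precisely your computation of $p'_*(\psi_\star^{j_r})$ --- the comparison $\psi_\star = p'^*\bar\psi_\star + D_{\star,x}$ together with $\psi_\star\cdot D_{\star,x}=0$ and $p'_*D_{\star,x}=1$ is the standard proof of that equation. The only difference is that you re-derive this instance from scratch (and spell out the compatibility $\pi\circ\xi_\Gamma=\xi_{\Gamma'}\circ p$ and the vanishing of automorphisms), whereas the paper simply cites the String Equation from the literature.
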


\begin{proof}
This follows immediately from Lemma~\ref{lem:ChSprod} and from the String Equation (see \cite[Proposition~4.9]{ACG}).
\end{proof}

\subsection{Push-forward of products of divisors on the universal curve}
\label{PFproddiv}

To establish our main result, Theorem~\ref{MainThm}, we will need a formula for the push-forward
\[
\pi_* \left( K_{\pi}^{\alpha} \cdot \prod_{p \in P} \sigma_p^{\beta_p}  \prod_{h,S}  C_{h,S}^{\gamma_{h,S}}\right)
\]
of an arbitrary product of divisor classes from the universal curve. By using the vanishing relations
\begin{enumerate}
    \item $\sigma_p\cdot \sigma_q=0$ for all $p\neq q$,
    \item $K_{\pi} \cdot \sigma_p = 0$ for all $p\in P$,
    \item $C_{h_1,S_1}\cdot C_{h_2,S_2} = 0$ if  $(h_1,S_1)\nleq (h_2,S_2)$ and  $(h_2,S_2)\nleq (h_1,S_1)$ (see Lemma \ref{lem:ChSprod}), 
\end{enumerate}
this is reduced to the problem of proving the remaining lemmas of this section. 

For the next result, we will also make use of the relation
\be \label{vanish} \sigma_p\cdot C_{h,S} = 0 \textrm{ for all } p\in S.\ee

\begin{lemma}\label{lemma:Sigma_cap_components}
Let $r\geq 0$  and assume  $(h_1,S_1)< \cdots <(h_r,S_r)$ is a strictly ordered chain of stable bipartitions of $(g,P)$. Let $b,k_1,\ldots,k_r>0$ be integers. Then
\[
\pi_\ast\left(\sigma_p^b\cdot \prod_{j=1}^r C_{h_j,S_j}^{k_j}\right) =
\begin{cases}
(-\psi_p)^{b-1}\mathbf{X}_{(\mathbf{h}_r),(\mathbf{S}_r)}^{\mathbf{k}_r} & \textrm{if }p\in P\setminus S_r \\
0 & \textrm{if }p\in S_r.
\end{cases}
\]
\end{lemma}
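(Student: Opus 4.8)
The plan is to peel off the factor $\sigma_p^b$ by means of the self-intersection of the section, reduce the statement to a single Gysin pullback along $\sigma_p$, and then identify that pullback with the class $\mathbf X$.

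First I would record the self-intersection formula for the section $\sigma_p\colon \mbgp\to\cbgp$. Its normal bundle is $\sigma_p^\ast T_\pi=(\sigma_p^\ast\omega_\pi)^\vee$, whose first Chern class is $-\psi_p$; hence $\sigma_p\cdot\sigma_p=\sigma_{p\ast}(-\psi_p)$ and, inductively, $\sigma_p^b=\sigma_{p\ast}\big((-\psi_p)^{b-1}\big)$ for every $b\geq 1$. Writing $\Xi\defeq\prod_{j=1}^r C_{h_j,S_j}^{k_j}$ and applying the projection formula together with $\pi\circ\sigma_p=\id$, I obtain
\[
\pi_\ast\!\left(\sigma_p^b\cdot\Xi\right)=\pi_\ast\sigma_{p\ast}\!\left((-\psi_p)^{b-1}\cdot\sigma_p^\ast\Xi\right)=(-\psi_p)^{b-1}\cdot\sigma_p^\ast\Xi .
\]
The whole problem is thereby reduced to computing the pullback $\sigma_p^\ast\Xi=\prod_{j=1}^r(\sigma_p^\ast C_{h_j,S_j})^{k_j}$, using that $\sigma_p^\ast$ is a ring homomorphism.

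The key geometric input is the evaluation of $\sigma_p^\ast C_{h,S}$. Identifying the image of $\sigma_p$ with the boundary divisor $\delta_{0,\{p,x\}}\cong\overline{\mathcal M}_{g,(P\setminus\{p\})\cup\{\bullet\}}$ (the locus where the moving point $x$ collides with $p$), the pullback $\sigma_p^\ast C_{h,S}$ is the restriction of $C_{h,S}=\delta_{\Gamma_{h,S}}$ to $\delta_{0,\{p,x\}}$. As these are two distinct boundary divisors, the restriction carries no self-intersection ($\psi$) correction and is computed, as in \cite[Appendix]{graberpanda}, by enumerating the two-edge stable graphs that degenerate to both: when $p\in S$ the bubble $\{p,x\}$ would have to attach on the side opposite to $x$, which is impossible, so $\sigma_p^\ast C_{h,S}=0$ (recovering \eqref{vanish}); when $p\in P\setminus S$ there is a unique such graph, giving, after renaming $\bullet\mapsto p$, exactly $\sigma_p^\ast C_{h,S}=\delta_{h,S}$ with multiplicity one. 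With this in hand the two cases follow. If $p\in S_r$, then since $k_r>0$ the product $\sigma_p^\ast\Xi$ contains the factor $(\sigma_p^\ast C_{h_r,S_r})^{k_r}=0$ and the pushforward vanishes. If instead $p\in P\setminus S_r$, then $p\notin S_j$ for every $j$ (because $S_1\subseteq\cdots\subseteq S_r$), whence $\sigma_p^\ast\Xi=\prod_{j=1}^r\delta_{h_j,S_j}^{k_j}$. Since every $S_j$ contains the distinguished marking $1$ (Convention~\ref{convention_1isinS}) and excludes $p$, the marking $p$ now plays on $\mbgp$ exactly the role that $x$ played on $\cbgp$ in Lemma~\ref{lem:ChSprod}; the identical Graber--Pandharipande computation expresses $\prod_j\delta_{h_j,S_j}^{k_j}$ as the same binomial combination of decorated chain classes that, by definition (Notation~\ref{notation_tautological}), equals $\mathbf X_{(\mathbf h_r),(\mathbf S_r)}^{\mathbf k_r}$. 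Substituting back yields $(-\psi_p)^{b-1}\mathbf X_{(\mathbf h_r),(\mathbf S_r)}^{\mathbf k_r}$, with the degenerate case $r=0$ reducing to the bare identity $\pi_\ast(\sigma_p^b)=(-\psi_p)^{b-1}$.

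The step I expect to be the main obstacle is the transversality and multiplicity-one claim underlying $\sigma_p^\ast C_{h,S}=\delta_{h,S}$: one must verify that $\delta_{0,\{p,x\}}$ meets the stratum $\Gamma_{h,S}$ cleanly and that precisely one two-edge graph contributes, so that no excess-intersection correction intervenes. Once this boundary bookkeeping is settled, the remaining combinatorial identification with $\mathbf X$ is forced by the definitions, so the rest of the argument is routine.
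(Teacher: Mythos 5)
Your proof is correct, but it runs ``dually'' to the paper's. The paper's own argument (two lines) stays upstairs on $\cbgp$: the case $p\in S_r$ is immediate from the vanishing relation \eqref{vanish}, and for $p\in P\setminus S_r$ it expands $\prod_j C_{h_j,S_j}^{k_j}$ into decorated chain classes via Lemma~\ref{lem:ChSprod} and then pushes the factor $\sigma_p^b$ forward using the String Equation. You instead trade $\sigma_p^b$ for $\sigma_{p\ast}\bigl((-\psi_p)^{b-1}\bigr)$ via the self-intersection formula $\sigma_p^\ast\sigma_p=-\psi_p$, reduce by the projection formula to the single Gysin pullback $\sigma_p^\ast\prod_j C_{h_j,S_j}^{k_j}$, and evaluate that downstairs on $\mbgp$ through $\sigma_p^\ast C_{h,S}\in\{0,\delta_{h,S}\}$ followed by the Graber--Pandharipande chain computation for $\prod_j\delta_{h_j,S_j}^{k_j}$. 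Both routes rest on the same two ingredients (the boundary calculus of \cite[Appendix]{graberpanda} and the identity $c_1(N_{\sigma_p})=-\psi_p$, which is also what underlies the String-Equation step $\pi_\ast\sigma_p^b=(-\psi_p)^{b-1}$), but yours cannot quote Lemma~\ref{lem:ChSprod} verbatim, since that lemma lives on $\cbgp$ where the moving point $x$ is available; you must re-run its proof for the divisors $\delta_{h_j,S_j}$ on $\mbgp$. You correctly isolate the point that makes this legitimate: because $1\in S_j$ and $p\notin S_j$ for every $j$ (the latter holding exactly because $p\in P\setminus S_r$ and the chain is nested), the marking $p$ separates in the products just as $x$ did, so no extra two-edge graphs contribute --- and this is precisely where the two cases of the statement diverge (if some $S_j$ contained $p$, extra graphs such as those with $S_j\cup S_{j'}=P$ could appear). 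Your flagged worry about excess intersection in $\sigma_p^\ast C_{h,S}=\delta_{h,S}$ is likewise settled by the same calculus: the two divisors are distinct boundary divisors, so no $\psi$-correction term arises and the unique contributing two-edge graph enters with multiplicity one. What your organization buys is a self-contained argument on the base that avoids invoking the String/Dilaton equations; what the paper's buys is brevity, by reusing Lemma~\ref{lem:ChSprod} as stated.
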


\begin{proof}
The second equality follows immediately from Equation~\eqref{vanish}. The first equality follows from Lemma~\ref{lem:ChSprod} and from the String Equation (see \cite[Proposition~4.9]{ACG}).
\end{proof}

\begin{remark}
The classes  $\psi_p^{b-1}\mathbf{X}_{(\mathbf{h}_r),(\mathbf{S}_r)}^{\mathbf{k}_r}$ belong to the set of standard generators because of the equality  $\psi_p^e\cdot \xi_{\Gamma\,\ast}(\alpha) = \xi_{\Gamma\,\ast}(\psi_p^e\cdot \alpha)$, which follows from the projection formula combined with the fact that psi classes pull back along the clutching morphisms.
\end{remark}

\begin{lemma}\label{lemma:K_cap_components} 
Let $r\geq 0$  and assume $(h_1,S_1)< \cdots < (h_r,S_r)$ is a strictly ordered chain of stable bipartitions of $(g,P)$. For all integers $k_1,\ldots,k_r>0$ and $b \geq 0$ we have the identity 
\[
\pi_\ast\left(K_\pi^b\cdot \prod_{j=1}^r C_{h_j,S_j}^{k_j}\right) = 
\mathbf{Z}_{(\mathbf{h}_r),(\mathbf{S}_r)}^{\mathbf{k}_r,b-1}.
\]
\end{lemma}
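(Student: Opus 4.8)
The plan is to reduce the whole computation to a one-vertex pushforward, by combining Lemma~\ref{lem:ChSprod} with the functoriality of proper pushforward and the projection formula. The starting observation is the standard identity $K_\pi=\psi_x$ on $\cbgp=\M_{g,P\cup\set{x}}$, where $\psi_x$ is the cotangent line class at the moving marking $x$; this follows at once from the comparison $\psi_x=c_1(\omega_\pi)+\sum_{p\in P}\sigma_p$ together with the definition $K_\pi=c_1(\omega_\pi(\sum_p\sigma_p))$. Thus $K_\pi^b=\psi_x^b$, and the key advantage is that $\psi_x$ pulls back to the cotangent class $\psi_x$ under every clutching morphism (psi classes pull back to psi classes, as already used in this section).

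First I would expand the product $\prod_j C_{h_j,S_j}^{k_j}$ using Lemma~\ref{lem:ChSprod}, writing it as a binomial-weighted sum of classes $C_{(\mathbf h_r),(\mathbf S_r)}^{(i_1,j_1),\dots,(i_r,j_r)}=\xi_{\Gamma\ast}\big(\prod_{l=1}^r\psi_{\bullet_l}^{i_l}\psi_{\star_l}^{j_l}\big)$ with $j_l=k_l-1-i_l$, where $\Gamma$ is the chain graph carrying the marking $x$ on its rightmost vertex $v_r$ of genus $g-h_r$ and markings $(P\setminus S_r)\cup\set{x}$. By the projection formula and $\xi_\Gamma^\ast\psi_x=\psi_x$,
\[
\psi_x^b\cdot C_{(\mathbf h_r),(\mathbf S_r)}^{(i_1,j_1),\dots,(i_r,j_r)}=\xi_{\Gamma\ast}\Big(\psi_x^b\prod_{l=1}^r\psi_{\bullet_l}^{i_l}\psi_{\star_l}^{j_l}\Big),
\]
where both $\psi_x$ and $\psi_{\star_r}$ live on the rightmost factor $M=\M_{g-h_r,(P\setminus S_r)\cup\set{\star_r,x}}$. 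Since $\pi$ forgets $x$ and $x$ sits on $v_r$, the compatibility $\pi\circ\xi_\Gamma=\xi_{\Gamma'}\circ\pi_{v_r}$ of gluing and forgetting (with $\Gamma'$ the graph $\Gamma$ with $x$ deleted, and $\pi_{v_r}$ forgetting $x$ on the last factor) gives $\pi_\ast\xi_{\Gamma\ast}=\xi_{\Gamma'\ast}(\pi_{v_r})_\ast$ by functoriality. The psi-decorations on the other vertices are pulled back from $\Gamma'$, so by the projection formula they come out of $(\pi_{v_r})_\ast$, and I am left to compute the single-vertex pushforward $(\pi_{v_r})_\ast(\psi_{\star_r}^{j_r}\psi_x^b)$.

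This single-vertex pushforward is the crux. For $b\ge 1$ I would use the comparison $\psi_{\star_r}=\pi_{v_r}^\ast\psi_{\star_r}+D_{\star_r x}$ together with $\psi_x\cdot D_{\star_r x}=0$ (the cotangent class at $x$ restricts to zero on the three-pointed rational bubble $D_{\star_r x}$): every term containing $D_{\star_r x}$ is annihilated by the factor $\psi_x^b$, so only $\pi_{v_r}^\ast(\psi_{\star_r}^{j_r})\,\psi_x^b$ survives and
\[
(\pi_{v_r})_\ast(\psi_{\star_r}^{j_r}\psi_x^b)=\psi_{\star_r}^{j_r}\,(\pi_{v_r})_\ast\psi_x^b=\psi_{\star_r}^{j_r}\kappa_{b-1},
\]
the last step being the defining relation $\kappa_a=\pi_\ast\psi_x^{a+1}$ applied on $M$. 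Substituting back produces exactly the class $Z_{(\mathbf h_r),(\mathbf S_r)}^{(i_1,j_1),\dots,(i_r,j_r),b-1}$, and re-summing with the binomial weights of Lemma~\ref{lem:ChSprod} yields $\mathbf Z_{(\mathbf h_r),(\mathbf S_r)}^{\mathbf k_r,b-1}$.

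For $b=0$ there is nothing new: the computation collapses to $\pi_\ast\big(C_{(\mathbf h_r),(\mathbf S_r)}^{(i_1,j_1),\dots,(i_r,j_r)}\big)$, which is governed by the String Equation and is precisely Lemma~\ref{lemma:PF_C}, giving $0$ when $j_r=0$ and $X_{(\mathbf h_r),(\mathbf S_r)}^{(i_1,j_1),\dots,(i_r,j_r-1)}$ otherwise. After summation this is exactly the $b-1=-1$ branch of $\mathbf Z$ under the convention~\eqref{Psi_Shift}. The case $r=0$ is the bare statement $\pi_\ast\psi_x^b=\kappa_{b-1}$, with $\kappa_{-1}=0$. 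I expect the only delicate points to be bookkeeping: checking that forgetting $x$ keeps $v_r$ stable, which is guaranteed by stability of the bipartition $(h_r,S_r)$ so that $\Gamma'$ is an honest stable graph, and verifying that the half-edge index $j_r$ is left unchanged by the pushforward so that it decorates the correct vertex of the resulting $Z$-class.
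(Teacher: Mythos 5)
Your proof is correct and follows essentially the same route as the paper: identify $K_\pi$ with $\psi_x$, expand via Lemma~\ref{lem:ChSprod}, handle $b=0$ by the String Equation through Lemma~\ref{lemma:PF_C}, and handle $b>0$ by a dilaton-type pushforward on the vertex carrying $x$. The only difference is one of detail: the one-vertex computation you derive by hand (the comparison $\psi_{\star_r}=\pi_{v_r}^\ast\psi_{\star_r}+D_{\star_r x}$, the vanishing $\psi_x\cdot D_{\star_r x}=0$, and $\pi_\ast\psi_x^b=\kappa_{b-1}$) is precisely what the paper invokes as the Dilaton Equation via \cite[Proposition~4.9]{ACG}.
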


\begin{proof}
The case $b=0$ follows immediately from Lemma~\ref{lemma:PF_C} when $r>0$, and from the very definition of $\mathbf{Z}_{(\mathbf{h}_r),(\mathbf{S}_r)}^{\mathbf{k}_r,b-1}$ in the case $(r,b)=(0,0)$.

Let now $b>0$. Note that under the identification of the universal curve $\overline{\mathcal{C}}_{g,P}$ with $\overline{\mathcal{M}}_{g,P \cup \set{x}}$ the class $K_\pi$ corresponds to $\psi_x$. The claim then follows from Lemma~\ref{lem:ChSprod} and from the Dilaton Equation (see  \cite[Proposition~4.9]{ACG}).
\end{proof}

\section{Proof of main theorem}\label{formulazza}

This section provides a proof of our main result, Theorem~\ref{MainThm}, using the notation established in Section~\ref{Notation}.  We prove the theorem by following Mumford (and later Chiodo), namely by applying the Grothendieck--Riemann--Roch formula to the universal curve $\pi$. There are, in principle, different ways to approach the calculation. Our approach is to reduce this computation to the pushforward along $\pi$ of products of divisors, and we know how to express them as linear combinations of decorated boundary strata classes following Section~\ref{PFproddiv}.

Consider the divisor class
\[
\mathsf D= \ell \widetilde{K}_{\pi} + \sum_{p\in P} d_p \sigma_p  + \sum_{h,S} a_{h,S} C_{h,S},
\]
on the universal curve $\cbgp$, where the indices $(h,S)$ run over the set of stable bipartitions of $(g,P)$ and $\ell$, $d_p$, $a_{h,S}\in \mathbb{Z}$. 
It will be convenient to write
\be\label{Decompose_D}
\mathsf D = \ell \widetilde K_\pi + \mathsf C + \mathsf S,
\ee
where
\[
\mathsf C = \sum_{h,S}a_{h,S}C_{h,S},\quad \mathsf S = \sum_{p\in P}d_p\sigma_p.
\]
For later use, we use the multinomial theorem and Lemma \ref{lem:ChSprod} to expand the power 
\begin{equation}\label{C_power}
    \begin{split}
        \frac{\mathsf C^a}{a!} &= \frac{1}{a!}\sum_{\substack{r \geq 0\\k_1+\cdots +k_r = a \\ k_j>0 \\ (h_1,S_1)< \cdots < (h_r,S_r)}}\binom{a}{k_1,\ldots,k_r}\prod_{j=1}^{r} a_{h_j,S_j}^{k_j}C_{h_j,S_j}^{k_j}\\
        &= \sum_{\substack{r \geq 0\\k_1+\cdots +k_r = a \\ k_j>0 \\ (h_1,S_1)< \cdots < (h_r,S_r)}}
        \sum_{\substack{ 0\leq i_1 \leq k_1-1\\ \cdots \\ 0\leq i_r \leq k_r-1 }}   \prod_{j=1}^r\left(\frac{a_{h_j,S_j}^{k_j}}{k_j!} (-1)^{ (k_j-1)} \binom{k_j-1}{i_j}\right) C_{(h_1,\ldots,h_r),(S_1,\ldots,S_r)}^{(i_1,k_1-1-i_1),\ldots,(i_r,k_r-1-i_r)},
    \end{split}      
\end{equation}
where  $(h_1,S_1)<\cdots<(h_r,S_r)$ denotes any strictly ordered chain of stable bipartitions (such partial order being defined in~\eqref{def:partial_order}).

\smallbreak
Let $\Sigma\subset \cbgp$
be the smooth closed codimension two substack parametrising the nodes in the fibers of the universal curve $\pi$.
Running the Grothendieck--Riemann--Roch formula we find
\[
\ch(R^\bullet\pi_\ast\O(\mathsf D)) 
= \pi_\ast(\ch(\O(\mathsf D))\cdot \Td^\vee(\Omega^1_\pi)) 
= \pi_\ast\left(e^{\mathsf D}\cdot \frac{\widetilde K_\pi}{e^{\widetilde K_\pi}-1}\cdot \Td^\vee(\O_\Sigma)^{-1}\right).
\]
A classical argument first given by Mumford and described in \cite[Chapter~17.5]{ACG} shows that $\Td^\vee(\O_\Sigma)^{-1}-1$  intersects $\widetilde K_\pi$ trivially. Therefore
\begin{align*}
    \ch(R^\bullet\pi_\ast\O(\mathsf D)) &=
    \pi_\ast\left(e^{\mathsf D}\cdot \frac{\widetilde K_\pi}{e^{\widetilde K_\pi}-1}\cdot \left(1+ \Td^\vee(\O_\Sigma)^{-1}-1\right)\right) \\
    &= \pi_\ast\left(e^{\mathsf D}\cdot \frac{\widetilde K_\pi}{e^{\widetilde K_\pi}-1}\right) + \pi_*\left(\left(\Td^\vee(\O_\Sigma)^{-1}-1\right)e^{\mathsf D}\right)\\
    &= \Omega + \Phi
\end{align*}
where $\Omega$ (resp.~$\Phi$) is defined to be the first summand (resp.~the second summand) of the previous equality.

The term $\Phi$ is computed in the following Lemma.
\begin{lemma}\label{lemma:todd_intersect_exp}
 We have 
 \begin{multline*}
 \pi_*\left(\left(\Td^\vee(\O_\Sigma)^{-1}-1\right)e^{\mathsf D}\right)=\\
    \sum_{t\geq 2} \sum_{\substack{a+b=t\\ b>0\textup{ even}\\ a\geq 0}} \frac{ B_{b}}{b!} \sum_{\substack{r \geq 0\\k_1+\cdots +k_r = a \\ k_j>0 \\ (h_1,S_1)< \cdots < (h_r,S_r)}} \prod_{j=1}^r  \frac{a_{h_j,S_j}^{k_j}}{k_j!}
   \sum_{ 0 \leq e \leq b-2} (-1)^{e}  \\
   \cdot \Bigg( \left(\sum_{(l,T)> (h_r,S_r)}\widetilde{\mathbf{X}}_{(\mathbf{h}_r,l),(\mathbf{S}_r,T)}^{\mathbf{k}_r,(e,b-2-e)}  \right) + \widetilde{\mathbf{Y}}_{(\mathbf{h}_r),(\mathbf{S}_r)}^{\mathbf{k}_r,(e,b-2-e)}
    +  (-1)^{k_r} \widetilde{\mathbf{X}}_{(\mathbf{h}_{r-1},h_r),(\mathbf{S}_{r-1},S_r)}^{\mathbf{k}_{r-1},(e+k_r,b-2-e)}  \Bigg).
\end{multline*}
\end{lemma}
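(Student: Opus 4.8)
The plan is to first strip $e^{\mathsf D}$ down to its ``purely boundary'' part, and then to carry out the intersection-theoretic bookkeeping of multiplying the node class by a chain of components $C_{h_j,S_j}$ and pushing it forward along $\pi$. I would begin by recalling the classical computation of \cite[Chapter~17.5]{ACG}, which realises the correction term as a pushforward from $\Sigma$: writing $\iota\colon\Sigma\hookrightarrow\cbgp$ for the inclusion and $\psi_\bullet,\psi_\star$ for the cotangent classes at the two branches of the node, one has
\[
\Td^\vee(\O_\Sigma)^{-1}-1=\iota_*\!\left(\sum_{\substack{b\geq 2\\ b\textup{ even}}}\frac{B_b}{b!}\sum_{e=0}^{b-2}(-1)^e\,\psi_\bullet^e\psi_\star^{b-2-e}\right),
\]
where only even $b\geq 2$ occur, since $B_b=0$ for odd $b\geq 3$ and the $b=1$ summand is empty. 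This class is supported on $\Sigma$; moreover $\widetilde{K}_\pi$ meets it trivially (the cited Mumford argument), and each section $\sigma_p$ is disjoint from the relative singular locus $\Sigma$, so $\sigma_p$ meets it trivially as well. Consequently every monomial of $e^{\mathsf D}=e^{\ell\widetilde{K}_\pi+\mathsf C+\mathsf S}$ carrying a factor $\widetilde{K}_\pi$ or $\sigma_p$ is annihilated, and
\[
\pi_*\!\left(\left(\Td^\vee(\O_\Sigma)^{-1}-1\right)e^{\mathsf D}\right)=\pi_*\!\left(\left(\Td^\vee(\O_\Sigma)^{-1}-1\right)e^{\mathsf C}\right).
\]
Expanding $e^{\mathsf C}=\sum_a\mathsf C^a/a!$ via \eqref{C_power} then explains at once why the answer involves only the $a_{h,S}$ (through $\prod_j a_{h_j,S_j}^{k_j}/k_j!$) and the classical Bernoulli numbers, with no dependence on $\ell$ or the $d_p$.

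The task thus reduces, for each strictly ordered chain $(h_1,S_1)<\cdots<(h_r,S_r)$ and each $e$, to computing $\pi_*\!\big(\iota_*(\psi_\bullet^e\psi_\star^{b-2-e})\cdot\prod_{j=1}^rC_{h_j,S_j}^{k_j}\big)$. I would use the standard gluing description of $\Sigma$: realised inside $\cbgp=\M_{g,P\cup\{x\}}$ as the locus where $x$ sits on a rational bubble carrying the two branch half-edges, its components are, up to the $\bullet\leftrightarrow\star$ automorphism of the non-separating piece, the products $\M_{h,S\cup\{\bullet\}}\times\M_{g-h,S^c\cup\{\star\}}$ and $\M_{g-1,P\cup\{\bullet,\star\}}$. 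The crucial point is that forgetting $x$ contracts this bubble, so that $\pi\circ\iota$ equals the clutching morphism $\xi_{h,S}$ (resp.\ $\xi_{\textup{irr}}$) and $\psi_\bullet,\psi_\star$ become its half-edge classes. In the base case $r=0$ this already produces the sum of a separating contribution over all bipartitions $(l,T)$ and one non-separating contribution, matching $\sum_{(l,T)}\widetilde{\mathbf{X}}_{(l),(T)}^{(e,b-2-e)}+\widetilde{\mathbf{Y}}^{(e,b-2-e)}$, with the third summand vanishing by the $r=-1$ convention.

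For $r>0$ I would invoke the projection formula to reduce to the restriction $\iota^*\big(\prod_jC_{h_j,S_j}^{k_j}\big)$ and its pushforward along $\xi$. Because $x$ lies on the last vertex of the chain (the one carrying $(P\setminus S_r)\cup\{x\}$), the bubble of $\Sigma$ can attach only there; this localises the new node to the end of the chain and, after forgetting $x$, leaves exactly three configurations: a fresh separating node $(l,T)>(h_r,S_r)$ extending the chain (the $\widetilde{\mathbf{X}}_{(\mathbf{h}_r,l),(\mathbf{S}_r,T)}$ terms), a non-separating node on the last vertex (the $\widetilde{\mathbf{Y}}_{(\mathbf{h}_r),(\mathbf{S}_r)}$ term), and a node colliding with the last edge $(h_r,S_r)$ (the $\widetilde{\mathbf{X}}_{(\mathbf{h}_{r-1},h_r),(\mathbf{S}_{r-1},S_r)}$ term). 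The binomial weights and signs recorded in Notation~\ref{notation_tautological} are exactly those generated by the expansion $C_{h_j,S_j}^{k_j}=\xi_{h_j,S_j\,*}(-\psi_\bullet-\psi_\star)^{k_j-1}$ of Lemma~\ref{lem:ChSprod}, together with the String and Dilaton equations used in Section~\ref{PFproddiv}.

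I expect the third configuration to be the main obstacle. There the node of $\Sigma$ collides with the last edge of the chain, so that its decoration $\psi_\bullet^e\psi_\star^{b-2-e}$ must be combined, through the self-intersection (excess) contribution at that edge, with the factor $(-\psi_\bullet-\psi_\star)^{k_r-1}$ coming from $C_{h_r,S_r}^{k_r}$; disentangling this collision is precisely what yields the sign $(-1)^{k_r}$ and the index shift $e\mapsto e+k_r$ in the last summand. The remaining care lies in checking that the three contributions reassemble with the correct binomial coefficients, that the strict-ordering constraints are preserved (in particular $(l,T)>(h_r,S_r)$ in the first configuration), and that the $r=0$ degeneration is consistent with the convention setting the $r=-1$ class to zero. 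Collecting the factor $B_b/b!$ with the sign $(-1)^e$ from the node class and summing over $e$ and over the chains weighted by $\prod_j a_{h_j,S_j}^{k_j}/k_j!$ then yields the stated formula.
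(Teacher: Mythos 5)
Your proposal is correct and follows essentially the same route as the paper: reduce $e^{\mathsf D}$ to $e^{\mathsf C}$ via the Mumford/ACG vanishing (plus disjointness of sections from $\Sigma$), expand $e^{\mathsf C}$ by \eqref{C_power}, and then analyse the three intersection configurations (new node $(l,T)>(h_r,S_r)$, non-separating node, and the excess collision with the last edge producing the sign $(-1)^{k_r}$ and the shift $e\mapsto e+k_r$). The only difference is presentational: you write the node class as $\iota_*$ of $\psi_\bullet,\psi_\star$ monomials and invoke excess intersection directly, whereas the paper writes the same class via the decorated strata $A_{l,T}^{(i,j)}$, $B^{(i,j)}$ and carries out the equivalent graph-by-graph computation.
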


Recall that in Notation~\ref{notation_tautological} we set the  class $\widetilde{\mathbf{X}}^{\mathbf k_{-1}}$ to equal zero.

\begin{proof}
A classical argument given in \cite[Chapter 17.5]{ACG} shows that $\widetilde{K}_\pi$ and $\sigma_p$ intersect $(\Td^\vee(\O_\Sigma)^{-1}-1)$ trivially. We therefore have
\begin{align*}
    \pi_*\left(\left(\Td^\vee(\O_\Sigma)^{-1}-1\right)e^{\mathsf D}\right) &=
    \pi_*\left(\left(\Td^\vee(\O_\Sigma)^{-1}-1\right)e^{\mathsf C}\right).
\end{align*}
The class $\Td^\vee(\O_\Sigma)^{-1}-1$ is also explicitly computed in \cite[Chapter 17.5]{ACG}  as 
\begin{equation}\label{eq:Psiup}
\Td^\vee(\O_\Sigma)^{-1}-1=\sum_{\substack{b>0 \\b \textup{ even}}} \frac{ B_{b}}{b!} 
\sum_{e=0}^{b-2} (-1)^{e} \left(\sum_{l,T} A_{l,T}^{(e,b-2-e)} + B^{(e,b-2-e)}\right)
\end{equation}
where $(l,T)$ runs over all stable bipartitions, and
\begin{center}
\begin{tabular}{c@{\hspace{1in}}c}
$A_{h,S}^{(i,j)}\defeq $\begin{tikzpicture}[->,>=bad to,baseline=-3pt,node distance=1.3cm,thick,main node/.style={circle,draw,font=\Large,scale=0.5}]
\node at (0,.4) [main node] (A) {$h$};
\node at (1,.4) [main node] (B) {$0$};
\node at (2,.4) [main node] (C) {$g-h$};
\node at (0,-.4)  (As) {$S$};
\node at (1,-.4)  (Bs) {$x $};
\node at (2,-.4)  (Cs) {$P\setminus S$};
\node at (.4,.8) (q) {\tiny $(i)$};
\node at (1.4,.8) (r) {\tiny $(j)$};
\draw [<<-] (A) to (B);
\draw [->>] (B) to (C);
\draw [-] (A) to (As);
\draw [-] (B) to (Bs);
\draw [-] (C) to (Cs);
\end{tikzpicture},
&  
$B^{(i,j)}\defeq $\begin{tikzpicture}[->,>=bad to,baseline=-3pt,node distance=1.3cm,thick,main node/.style={circle,draw,font=\Large,scale=0.5}]
\node at (0,0) [main node] (A) {$g-1$};
\node at (1.4,0) [main node]  (B) {$0$};
\node at (-1,0)  (As) {$P$};
\node at (2,0)  (Bs) {$x $};
\node at (.4,.6) (q) {\tiny $(i)$};
\node at (.4,-.6) (r) {\tiny $(j)$};
\draw [<<-] (A) to [out=40, in=140] (B);
\draw [<<-] (A) to [out=-40, in=-140] (B);
\draw [-] (A) to (As);
\draw [-] (B) to (Bs);
\end{tikzpicture}.
\end{tabular}
\end{center}

We expand $e^{\mathsf C}=\sum_{a\geq 0} \mathsf{C}^a/a!$ via \eqref{C_power}, so that multiplying \eqref{eq:Psiup} with $e^{\mathsf C}$ we obtain 
\begin{equation*}
\begin{split}
 \left(\Td^\vee(\O_\Sigma)^{-1}-1\right)e^{\mathsf C}=&\\
 \sum_{t\geq 1} \sum_{\substack{a+b=t\\ b>0 \textup{ even}}} \frac{ B_{b}}{b!}& \sum_{\substack{r \geq 0\\k_1+\cdots +k_r = a \\ k_j>0 \\ (h_1,S_1)< \cdots < (h_r,S_r)}}
   \sum_{\substack{ 0\leq i_1 \leq k_1-1\\ \cdots \\ 0\leq i_r \leq k_r-1 }} \prod_{j=1}^r \left(\frac{a_{h_j,S_j}^{k_j}}{k_j!} (-1)^{ (k_j-1)} \binom{k_j-1}{i_j}\right) \cdot\\
& \cdot \left(\sum_{e=0}^{b-2} (-1)^{e}  \left(\sum_{l,T} A_{l,T}^{(e,b-2-e)} + B^{(e,b-2-e)}\right)\right) C_{(h_1,\ldots,h_r),(S_1,\ldots,S_r)}^{(i_1,k_1-1-i_1)\ldots(i_r,k_r-1-i_r)}.
 \end{split}
\end{equation*}

By a straightforward computation in the spirit of Lemma \ref{lem:ChSprod} it follows that, if $(l,T)>(h_r,S_r)$ (or if $r=0$),
\begin{align*}
    \pi_*\left(A_{l,T}^{(i',j')} C_{(h_1,\ldots,h_r),(S_1,\ldots,S_r)}^{(i_1,j_1),\ldots,(i_r,j_r)}\right)&=\pi_*\left(
     \begin{tikzpicture}[->,>=bad to,baseline=-3pt,node distance=1.3cm,thick,main node/.style={circle,draw,font=\Large,scale=0.5},shift={(0,.5)}]
\node at (0,0) [main node] (A) {$h_1$};
\node at (1.8,0) [main node] (B) {$h_2-h_1$};
\node at (3.3,0)  (C) {...};
\node at (4.8,0) [main node] (D) {$l-h_{r}$};
\node at (6.5,0) [main node] (E) {$0$};
\node at (8.2,0) [main node] (F) {$g-l$};
\node at (0,-1.2)  (As) {$S_1$};
\node at (1.8,-1.2)  (Bs) {$S_2\setminus S_1$};
\node at (4.8,-1.2)  (Ds) {$T\setminus S_{r}$};
\node at (6.5,-1.2)  (Es) {$x$};
\node at (8.2,-1.2)  (Fs) {$P\setminus T$};
\node at (.4,.5) (a) {\tiny $(i_1)$};
\node at (1.2,.5) (b1) {\tiny $(j_1)$};
\node at (2.4,.5) (b2) {\tiny $(i_2)$};
\node at (4.2,.5) (d1) {\tiny $(j_{r})$};
\node at (5.4,.5) (d2) {\tiny $(i')$};
\node at (7.6,.5) (e) {\tiny $(j')$};
\draw [<<->>] (A) to (B);
\draw [<<-] (B) to (C);
\draw [->>] (C) to (D);
\draw [<<-] (D) to (E);
\draw [->>] (E) to (F);
\draw [-] (A) to (As);
\draw [-] (B) to (Bs);
\draw [-] (D) to (Ds);
\draw [-] (E) to (Es);
\draw [-] (F) to (Fs);
\end{tikzpicture}
\right)\\
&=X_{(h_1,\ldots,h_r,l),(S_1,\ldots,S_r,T)}^{(i_1,j_1),\ldots,(i_r,j_r)(i',j')}.
\end{align*}
If $(l,T)=(h_r,S_r)$ and $j_r=0$, 
\begin{align*}
    \pi_*\left(A_{l,T}^{(i',j')} C_{(h_1,\ldots, h_{r-1},l),(S_1,\ldots,S_{r-1}, T)}^{(i_1,j_1),\ldots,(i_{r-1},j_{r-1}),(i_r,0)}\right)&=-\pi_*\left(
     \begin{tikzpicture}[->,>=bad to,baseline=-3pt,node distance=1.3cm,thick,main node/.style={circle,draw,font=\Large,scale=0.5},shift={(0,.5)}]
\node at (0,0) [main node] (A) {$h_1$};
\node at (1.8,0) [main node] (B) {$h_2-h_1$};
\node at (3.3,0)  (C) {...};
\node at (4.8,0) [main node] (D) {$l-h_{r-1}$};
\node at (6.7,0) [main node] (E) {$0$};
\node at (8.2,0) [main node] (F) {$g-l$};
\node at (0,-1.2)  (As) {$S_1$};
\node at (1.8,-1.2)  (Bs) {$S_2\setminus S_1$};
\node at (4.8,-1.2)  (Ds) {$T\setminus S_{r-1}$};
\node at (6.7,-1.2)  (Es) {$x$};
\node at (8.2,-1.2)  (Fs) {$P\setminus T$};
\node at (.4,.5) (a) {\tiny $(i_1)$};
\node at (1.2,.5) (b1) {\tiny $(j_1)$};
\node at (2.4,.5) (b2) {\tiny $(i_2)$};
\node at (4,.5) (d1) {\scriptsize $(j_{r-1})$};
\node at (5.8,.5) (d2) {\tiny $(i_r+i'+1)$};
\node at (7.6,.5) (e) {\tiny $(j')$};
\draw [<<->>] (A) to (B);
\draw [<<-] (B) to (C);
\draw [->>] (C) to (D);
\draw [<<-] (D) to (E);
\draw [->>] (E) to (F);
\draw [-] (A) to (As);
\draw [-] (B) to (Bs);
\draw [-] (D) to (Ds);
\draw [-] (E) to (Es);
\draw [-] (F) to (Fs);
\end{tikzpicture}
\right)\\
&=-X_{(h_1,\ldots,h_{r-1},l),(S_1,\ldots,S_{r-1},T)}^{(i_1,j_1),\ldots,(i_{r-1}, j_{r-1})(i_r+i'+1,j')},
\end{align*}
and in all other cases,
\begin{align*}
    \pi_*\left(A_{l,T}^{(i',j')} C_{(h_1,\ldots,h_r),(S_1,\ldots,S_r)}^{(i_1,j_1),\ldots,(i_r,j_r)}\right)=0.
\end{align*}
Similarly
\begin{align*}
\pi_*\left(B^{(i',j')} C_{(h_1,\ldots,h_r),(S_1,\ldots,S_r)}^{(i_1,j_1),\ldots,(i_r,j_r)}\right)&=\pi_*\left(
     \begin{tikzpicture}[->,>=bad to,baseline=-3pt,node distance=1.3cm,thick,main node/.style={circle,draw,font=\Large,scale=0.5},shift={(0,.5)}]
\node at (0,0) [main node] (A) {$h_1$};
\node at (1.8,0) [main node] (B) {$h_2-h_1$};
\node at (3.3,0)  (C) {...};
\node at (4.8,0) [main node] (D) {$h_r-h_{r-1}$};
\node at (7,0) [main node] (E) {$g-h_r$};
\node at (8.4,0) [main node] (F) {$0$};
\node at (0,-1.2)  (As) {$S_1$};
\node at (1.8,-1.2)  (Bs) {$S_2\setminus S_1$};
\node at (4.8,-1.2)  (Ds) {$S_r\setminus S_{r-1}$};
\node at (7,-1.2)  (Es) {$P\setminus S_r$};
\node at (8.4,-1.2)  (Fs) {$x$};
\node at (.4,.5) (a) {\tiny $(i_1)$};
\node at (1.2,.5) (b1) {\tiny $(j_1)$};
\node at (2.4,.5) (b2) {\tiny $(i_2)$};
\node at (4,.5) (d1) {\tiny $(j_{r-1})$};
\node at (5.4,.5) (d2) {\tiny $(i_r)$};
\node at (6.4,.5) (e) {\tiny $(j_r)$};
\node at (7.4,.6) (q) {\tiny $(i')$};
\node at (7.4,-.6) (r) {\tiny $(j')$};
\draw [<<->>] (A) to (B);
\draw [<<-] (B) to (C);
\draw [->>] (C) to (D);
\draw [<<->>] (D) to (E);
\draw [<<-] (E) to [out=30, in=140] (F);
\draw [<<-] (E) to [out=-30, in=-140] (F);
\draw [-] (A) to (As);
\draw [-] (B) to (Bs);
\draw [-] (D) to (Ds);
\draw [-] (E) to (Es);
\draw [-] (F) to (Fs);
\end{tikzpicture}
\right)\\
&=Y_{(h_1,\ldots,h_r),(S_1,\ldots,S_r)}^{(i_1,j_1),\ldots,(i_r,j_r),(i',j')}.
\end{align*}

Putting everything together we deduce the statement.
\end{proof}

The remainder of this section is devoted to computing the remaining term
\[
\Omega = \pi_\ast \left(e^{\mathsf D}\cdot \frac{\widetilde K_\pi}{e^{\widetilde K_\pi}-1}\right).
\]
This will conclude the proof of Theorem \ref{MainThm}. First, in the notation of Equation \eqref{Decompose_D}, we find
\[
\Omega = \pi_\ast \left(e^{\mathsf C+\mathsf S}\cdot \sum_{t\geq 0}\frac{B_t(\ell)}{t!}\widetilde K_\pi^t \right)
\]
where we have used the identity
\[
\sum_{t\geq 0}\frac{B_t(\ell)}{t!}x^t = e^{\ell x}\frac{x}{e^x-1} 
\]
defining the Bernoulli polynomials $B_t(\ell)$.
Now we use that
\[
\widetilde K_\pi^t = K_\pi^t + (-1)^t\sum_{p\in P}\sigma_p^t,\quad \textrm{for all } t>0.
\]
We obtain
\begin{multline}\label{three_terms}
    \Omega = \pi_\ast \left(e^{\mathsf C+\mathsf S}\cdot \left(1+ \sum_{t>0}\frac{B_t(\ell)}{t!}K_\pi^t + \sum_{t>0}\frac{(-1)^t B_t(\ell)}{t!}\sum_{p\in P}\sigma_p^t\right)\right)\\
    =\pi_{\ast}e^{\mathsf C+\mathsf S} + \pi_*\left(e^{\mathsf C+\mathsf S}\cdot \left(-1+1+\sum_{t>0}\frac{B_t(\ell)}{t!}K_\pi^t \right)\right) \\
    +\pi_*\left(e^{\mathsf C+\mathsf S}\cdot \left(-1+1+\sum_{t>0}\frac{(-1)^t B_t(\ell)}{t!}\sum_{p\in P}\sigma_p^t\right)\right).
\end{multline}
Let us expand the second summand of \eqref{three_terms}.
Before the pushforward, we have
\[
e^{\mathsf C + \mathsf S}\cdot \left(1+\sum_{t>0}\frac{B_t(\ell)}{t!}K_\pi^t\right) = e^{\mathsf C}\cdot \left(e^{\mathsf S}+\sum_{t>0}\frac{B_t(\ell)}{t!}K_\pi^t\right)
\]
because $K_\pi\cdot \mathsf S = 0$. It follows that
\begin{align*}
    \pi_*\left(e^{\mathsf C+\mathsf S}\cdot \left(-1+1+\sum_{t>0}\frac{B_t(\ell)}{t!}K_\pi^t \right)\right) 
    &= -\pi_*e^{\mathsf C+\mathsf S}+\pi_*\left(e^{\mathsf C}\cdot \left(e^{\mathsf S}+\sum_{t>0}\frac{B_t(\ell)}{t!}K_\pi^t\right)\right)\\
    &=\pi_*\left(e^{\mathsf C}\cdot \left(-1+1+\sum_{t>0}\frac{B_t(\ell)}{t!}K_\pi^t\right)\right)\\
    &=-\pi_\ast e^{\mathsf C} + \pi_\ast\left(\left(\sum_{a\geq 0}\frac{\mathsf C^a}{a!}\right)\cdot \left(\sum_{b\geq 0}\frac{B_b(\ell)}{b!}K_\pi^b\right)\right)\\
    &=-\pi_\ast e^{\mathsf C} +\sum_{\substack{t>0\\a+b=t}} \frac{B_b(\ell)}{a!b!}\pi_\ast\left(\mathsf C^a\cdot K_\pi^b\right).
\end{align*}
It remains to compute the last summand in \eqref{three_terms}. We start by observing that the formula
\be\label{product_sections}
\mathsf S^\alpha \cdot \left(\sum_{p\in P}\sigma_p\right)^\beta
=\sum_{p\in P}d_p^\alpha\sigma_p^{\alpha+\beta}
\ee
holds whenever $(\alpha,\beta)\neq (0,0)$. We have
\begin{align*}
    e^{\mathsf S}\cdot \left(1+\sum_{t>0}\frac{(-1)^t B_t(\ell)}{t!}\sum_{p\in P}\sigma_p^t\right) &= 
    e^{\mathsf S}\cdot \left(1+\sum_{t>0}\frac{(-1)^t B_t(\ell)}{t!}\left(\sum_{p\in P}\sigma_p\right)^t\right)\\
    & = 1+\sum_{\substack{t>0\\\alpha+\beta=t}}\frac{(-1)^\beta B_\beta(\ell)}{\alpha!\beta!}\mathsf S^\alpha\cdot \left(\sum_{p\in P}\sigma_p\right)^\beta\\
    &=1+\sum_{\substack{t>0\\\alpha+\beta=t}}\frac{(-1)^\beta B_\beta(\ell)}{\alpha!\beta!}\sum_{p\in P}d_p^\alpha \sigma_p^{t}.
\end{align*}
We were allowed to apply \eqref{product_sections} in the last equality thanks to the fact that $\alpha$ and $\beta$ cannot both vanish. Now the last summand in \eqref{three_terms} equals
\[
-\pi_*e^{\mathsf C+\mathsf S}+\pi_*\left(e^{\mathsf C}\cdot\left(1+\sum_{\substack{t>0\\\alpha+\beta=t}}\frac{(-1)^\beta B_\beta(\ell)}{\alpha!\beta!}\sum_{p\in P}d_p^\alpha \sigma_p^{t} \right)\right).
\]
This can be rewritten as 
\begin{multline} \label{blah}
    -\pi_*e^{\mathsf C+\mathsf S}+\pi_*\left[
    1+\sum_{t>0}\left(\frac{\mathsf C^t}{t!} +
    \sum_{\substack{a+b=t \\ b>0}} \frac{\mathsf C^a}{a!}\sum_{\alpha+\beta = b}\frac{(-1)^\beta B_\beta(\ell)}{\alpha!\beta!}\sum_{p\in P}d_p^\alpha\sigma_p^{b}\right)\right] \\
    =-\pi_*e^{\mathsf C+\mathsf S}+\pi_*e^{\mathsf C}+\sum_{\substack{t>0 \\ a+b=t \\ b>0 \\ \alpha+\beta=b}}\frac{(-1)^\beta B_\beta(\ell)}{\alpha!\beta!a!}\sum_{p\in P}d_p^\alpha \pi_\ast \left(\mathsf C^a\cdot \sigma_p^b\right).
\end{multline}

Summing up, we obtain
\[
    \Omega =
    \sum_{\substack{t>0\\a+b=t}} \frac{B_b(\ell)}{a!b!}\pi_\ast\left(\mathsf C^a\cdot K^b\right)
+
\sum_{\substack{t>0\\ a+b=t \\ b>0 \\ \alpha+\beta = b}}\frac{(-1)^\beta B_\beta(\ell)}{\alpha!\beta!a!}\sum_{p\in P}d_p^\alpha \pi_*\left(\mathsf C^a\cdot \sigma_p^b\right).
\]

Combining \eqref{C_power} with Lemma \ref{lemma:K_cap_components}, the first summand of $\Omega$ reads
\[
\sum_{\substack{t>0\\a+b=t}} \frac{B_b(\ell)}{a!b!}\pi_\ast\left(\mathsf C^a\cdot K^b\right) = 
  \sum_{\substack{t>0\\a+b=t}} \frac{B_b(\ell)}{b!}\sum_{\substack{r\geq 0\\k_1+\cdots +k_r = a \\ k_j>0 \\ (h_1,S_1)< \cdots < (h_r,S_r)}}\left(\prod_{j=1}^{r} \frac{a_{h_j,S_j}^{k_j}}{k_j!}\right) \mathbf{Z}_{(\mathbf{h}_r),(\mathbf{S}_r)}^{\mathbf{k}_r,b-1}.   
\]
By Lemma \ref{lemma:Sigma_cap_components}, the second summand of $\Omega$ reads
\begin{multline*}
\sum_{\substack{t>0\\ a+b=t \\ b>0 \\ \alpha+\beta = b}}\frac{(-1)^\beta B_\beta(\ell)}{\alpha!\beta!a!}\sum_{p\in P}d_p^\alpha \pi_*\left(\mathsf C^a\cdot \sigma_p^b\right) = 
\\
\sum_{\substack{t>0\\ a+b=t \\ b>0 \\ \alpha+\beta = b}}\frac{(-1)^\beta B_\beta(\ell)}{\alpha!\beta!}\sum_{\substack{r\geq 0\\k_1+\cdots +k_r = a \\ k_j>0 \\ (h_1,S_1)< \cdots < (h_r,S_r)}}\left(\prod_{j=1}^{r} \frac{a_{h_j,S_j}^{k_j}}{k_j!} \sum_{p\in P\setminus S_r} d_p^\alpha(-\psi_p)^{b-1}\right)\mathbf{X}_{(\mathbf{h}_r),(\mathbf{S}_r)}^{\mathbf{k}_r}. 
\end{multline*}
This  concludes the proof of Theorem~\ref{MainThm}.

\begin{example} \label{esempio0}
As a sanity check, we compute $\ch_0 (R^\bullet\pi_\ast\O(\mathsf D))$ using our formula in Theorem~\ref{MainThm}. This means extracting the term with degree equal to $1$ in the variable $t$. In particular, $\Phi$ does not contribute.

The only nonzero contribution from $\Omega$ occurs when $a=0$ and $b=1$ and it equals (first summand)
\[
\left(\ell- \frac{1}{2}\right) \kappa_0=\left(\ell-\frac{1}{2}\right)(2g-2+n)
\]
plus (second summand)
\[
\sum_{p \in P} d_p- \left(\ell-\frac{1}{2}\right) n
\]
which gives, for $d \defeq \ell(2g-2)+\sum_{p \in P} d_p$, the Riemann--Roch formula
\[
\ch_0 (R^\bullet\pi_\ast\O(\mathsf D))= d+1-g.
\]
\end{example}

\begin{example} \label{esempio}
Let us compute $\ch_1 (R^\bullet\pi_\ast\O(\mathsf D))$ in the generating set (which is actually a basis as long as $g \geq 3$) for the rational Chow group of codimension-$1$ classes of $\overline{\mathcal{M}}_{g,P}$ consisting of 
\[
\kappa_1, \set{\psi_p}_{p\in P}, \delta_{\text{irr}}, \set{\delta_{h,S}}_{(h,S)}.
\] 
This amounts to extracting the term of degree  $2$ in the variable $t$ from the formula of Theorem \ref{MainThm}.

The summand $\Phi$ only contributes to $\delta=\delta_{\text{irr}}+ \sum_{h,S} \delta_{h,S}$, and with coefficient $\frac{1}{12}$.

The summand $\Omega$ contributes to $\kappa_1$ with coefficient (from $a=0$ and $b=2$)
\[
\frac{B_2(\ell)}{2!} = \frac{\ell^2-\ell+\frac{1}{6}}{2}.
\]
It contributes to $\psi_p$ for $p \in P$ with coefficient (also from $a=0$ and $b=2$ but from the second summand of $\Omega$)
\[
-\frac{1}{2} d_p^2 + \left(\ell-\frac{1}{2}\right) d_p - \frac{\ell^2-\ell+\frac{1}{6}}{2}.
\]
(The three summands correspond to the cases $(\alpha,\beta)=(2,0)$,  $(\alpha,\beta)=(1,1)$, and $(\alpha,\beta)=(0,2)$ respectively).

Furthermore, the term $\Omega$  contributes to $\delta_{h,S}$ as follows. Setting $d_{S^c} \defeq \sum_{p \in P \setminus S} d_p$ the contribution of $\Omega$ with $a=b=1$ reads:
\[
\left(\left(\ell-\frac{1}{2}\right) (2g-2h -1)+ d_{S^c}\right)\cdot  a_{h,S}.
\]
(We get $B_1(\ell)\cdot a_{h,S}\cdot \mathbf{Z}^{{k}_1=1,0}_{h,S} = (\ell-1/2)\cdot a_{h,S}\cdot (2g-2h-1+|S^c|)$ from the first summand of $\Omega$. A further contribution $d_{S^c}\cdot a_{h,S}$ comes from $(\alpha,\beta) = (1,0)$, whereas $(\alpha,\beta) = (0,1)$ contributes $-(\ell-1/2)\cdot a_{h,S}|S^c|$.)

Finally, the contribution of $\Omega$ with $(a,b)=(2,0)$ is
\[
- \frac{a_{h,S}^2}{2}.
\]

The coefficient of $\delta_{h,S}$ is therefore
\[
\frac{1}{12}+\frac{1}{2}a_{h,S} \cdot \left((2g-2h-1)(2\ell-1)+2d_{S^c}-a_{h,S}\right).
\]
\end{example}

\section{Pullback of Brill--Noether classes via Abel--Jacobi sections}\label{Jacobians}

In this section we review the definition of compactified universal Jacobians $\jbgp(\phi)$ and then define the cohomological universal Brill--Noether classes 
\[
\mathsf w^r_d(\phi)\in A^{g-\rho} (\jbgp(\phi)),
\] 
where $\rho=g-(r+1)(g-d+r)$ is the Brill--Noether number. We always assume $r\geq 0$ and $d< g+r$ throughout. 

For fixed integers $\ell$ and $d_P\defeq \set{d_p|p \in P}$, in \eqref{fancypullback} we define the pullbacks 
\[
\mathsf Z^r_{\ell, d_P}(\phi)=\mathsf s^\ast \mathsf w^r_d(\phi),
\]
where $\mathsf s= \mathsf s_{\ell, d_P}$ is the Abel--Jacobi section defined by~\eqref{aj}. 

Finally, we  observe how the main result of the previous section allows one to explicitly compute the classes $\mathsf Z^r_{\ell, d_P}(\phi)$ in terms of decorated boundary strata classes, for all  $\phi$'s such that the section $\mathsf s$ is a well-defined morphism on $\mbgp$.

\subsection{Compactified universal Jacobians}

We first review the definition of the stability space $V_{g,P}^d$ from \cite[Definition~3.2]{KassPagani1} and the notion of nondegenerate elements therein.  An element 
\[
\phi \in V_{g,P}^d
\]
is an assignment, for every  stable $P$-pointed curve $(C,P)$ of genus $g$ and every irreducible component $C' \subseteq C$, of a real number $\phi(C, P)_{C'}$  such that 
\[ 
\sum_{C' \subseteq C} \phi(C, P)_{C'} = d,
\] 
and 
\begin{enumerate}
\item if $\alpha \colon \, (C, P) \to (D, Q)$ is a homeomorphism of pointed curves, then 
\[
\phi(D,Q)= \phi(\alpha(C, P));
\] 
\item the assignment $\phi$ is compatible with degenerations of pointed curves (in the sense of \cite[Definition~3.2]{KassPagani1}).
\end{enumerate}

The notion of $\phi$-(semi)stability  was introduced in \cite[Definitions~4.1 and~4.2]{KassPagani1}:

\begin{definition} \label{semistab}
Given $\phi \in V_{g,P}^d$ we say that a family $F$ of rank~$1$ torsion free sheaves of degree~$d$ on a family of stable curves is \emph{$\phi$-stable} if the inequality
\begin{equation} \label{eqnsemistab}
		\left| \deg_{C_0}(F)- \sum \limits_{C' \subseteq C_0} \phi(C,P)_{C'} + \frac{\delta_{C_0}(F)}{2} \right| <  \frac{\#(C_0 \cap \overline{C_0^{c}})-\delta_{C_0}(F)}{2}
	\end{equation}
holds for every stable $P$-pointed curve $(C, P)$ of genus $g$ of the family, and for every subcurve (i.e.~a union of irreducible components) $\emptyset \neq C_0 \subsetneq C$. Here $\delta_{C_{0}}(F)$ denotes the number of nodes $p \in C_0 \cap \overline{C_0^{c}}$ such that the stalk of $F$ at $p$ fails to be locally free. Semistability with respect to $\phi$ is defined by allowing equality in \eqref{eqnsemistab}.

A stability parameter $\phi \in V_{g,P}^d$ is said to be \emph{nondegenerate} when $\phi$-semistability coincides with $\phi$-stability for all stable $P$-pointed curves of genus $g$. 
\end{definition}

For all $\phi \in V_{g,P}^d$ there exists a moduli stack $\jbgp(\phi)$ of $\phi$-semistable sheaves on stable curves, which comes with a forgetful morphism \[p\colon \, \jbgp(\phi) \to\mbgp.\] When $\phi$ is nondegenerate, by \cite[Corollary~4.4]{KassPagani1}, the stack $\jbgp(\phi)$ is a smooth Deligne--Mumford stack, and the morphism $p$ is representable, proper and flat.

\subsection{Universal Brill--Noether classes and their pullbacks} \label{PBBN}

Let $\phi \in V_{g,P}^d$ be nondegenerate. Then by \cite[Corollary~4.3]{KassPagani0} and \cite[Lemma~3.35]{KassPagani1} combined with our general assumption $P \neq \emptyset$, there exists a tautological family $\mathscr L (\phi)$ of rank $1$ torsion free sheaves of degree $d$ on the total space of the universal curve 
\[
\widetilde{\pi} \colon \, \jbgp(\phi) \times_{\mbgp} \cbgp \to \jbgp(\phi).
\]
Recall the following notation from \cite[Ch.~14]{Ful}. Let $c = \sum_{k\in \Z}c_k$ be a formal sum of elements in a ring $R$. Then the $p\times p$ determinant $|c_{q+j-i}|$ in $R$ is denoted
\[
\Delta^{(p)}_{q}c =
\begin{vmatrix}
c_q & c_{q+1} & \cdots & c_{q+p-1} \\
c_{q-1} & c_q & \cdots & c_{q+p-2} \\
\vdots & \vdots & \ddots & \vdots \\
c_{q-p+1} & c_{q-p+2} & \cdots & c_q
\end{vmatrix}.
\]
Generalising the idea of \cite[Definition~3.38]{KassPagani0} (where the authors extended the universal theta divisor $\mathsf w^0_{g-1}$), we define the (universal, cohomological) \emph{Brill--Noether class} using the Thom--Porteous formula, namely by
\begin{equation} \label{wrdphi}
\mathsf w_d^r(\phi) \defeq \Delta_{g-d+r}^{(r+1)}c (-R^\bullet\widetilde{\pi}_\ast \mathscr L(\phi)) \in A^{g-\rho}(\jbgp(\phi)),
\end{equation}
for $\rho = g- (r+1)(g-d+r)$ the Brill--Noether number. One can interpret the class \eqref{wrdphi} as follows. Define the \emph{universal Brill--Noether scheme} as the closed subscheme
\be\label{Wrd43}
\mathcal W^r_d(\phi) = \Fit_{g-d+r}(R^1\widetilde{\pi}_\ast \mathscr L(\phi)) \subset \jbgp(\phi),
\ee
defined by the $(g-d+r)$-th Fitting ideal of $R^1\widetilde{\pi}_\ast \mathscr L(\phi)$ (see \cite[Ch.~21]{ACG} for the use of Fitting ideals in Brill--Noether theory). Then the Poincar\'e dual of \eqref{wrdphi} is the class that $\mathcal W^r_d(\phi)$ would have as its fundamental class if it were pure of the expected codimension $g-\rho$. The scheme~\eqref{Wrd43} has an explicit description as a degeneracy scheme, which was already described in the proof of \cite[Lemma~6]{HKP} in the case $r=d=0$.
Fix a sufficiently $\widetilde{\pi}$-ample divisor $H$, and consider the short exact sequence
\[
0\ra \mathscr L(\phi)\ra \mathscr L(\phi)(H) \xrightarrow{u} \mathscr L(\phi)\otimes \O_H(H)\ra 0.
\]
Pushing this forward via $\widetilde{\pi}$ yields a presentation
\[
\mathscr E_0\,\, \overset{\widetilde{\pi}_\ast u}{\lra}\,\, \mathscr E_1 \ra R^1\widetilde{\pi}_\ast \mathscr L(\phi) \ra 0
\]
of $R^1\widetilde{\pi}_\ast \mathscr L(\phi)$, where $\widetilde{\pi}_\ast u$ is a morphism of vector bundles whose virtual rank is
\[
\rk \mathscr E_0 - \rk \mathscr E_1 = d-g+1
\]
by Riemann--Roch. The $k$-th degeneracy scheme of $\widetilde{\pi}_\ast u$, where $k=\rk \mathscr E_0-(r+1) = \rk \mathscr E_1-(g-d+r)$, is by definition the zero scheme
\be\label{VanLocus}
Z\left(\wedge^{k+1} \widetilde{\pi}_\ast u\right)\subset \jbgp(\phi),
\ee
which agrees with \eqref{Wrd43} by the general theory of Fitting ideals. Note that, by this identification, the vanishing locus \eqref{VanLocus} is independent of the choice of $H$. Moreover, $\mathcal W^r_d(\phi)$ is set-theoretically supported on
\[
\Set{(C, P, F) | h^0(C,F)>r} \subset \jbgp(\phi).
\]
The definition \eqref{wrdphi} is motivated by the following lemma. 

\begin{lemma} \label{makessense} 
The class $\mathsf w^r_d(\phi)$ is supported on $\mathcal W^r_d(\phi)$.
If the Brill--Noether scheme $\mathcal W^r_d(\phi)$ is pure of the expected codimension $g-\rho$, then $\mathsf w^r_d(\phi)$ is its fundamental class.
\end{lemma}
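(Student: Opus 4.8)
The plan is to recognise $\mathsf w^r_d(\phi)$ as the Thom--Porteous class of the degeneracy morphism $\widetilde{\pi}_\ast u\colon \mathscr E_0\to\mathscr E_1$ constructed just above the statement, and then to read off both assertions from the refined form of the Thom--Porteous formula \cite[Ch.~14]{Ful}. Both claims are essentially formal once the determinantal bookkeeping is pinned down, so the proof should be short; the substance is matching conventions.

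First I would record the $K$-theoretic identity underlying the definition. Pushing the defining short exact sequence forward along $\widetilde{\pi}$ and using that $H$ is $\widetilde{\pi}$-ample yields the four-term exact sequence
\[
0\to \widetilde{\pi}_\ast\mathscr L(\phi)\to \mathscr E_0 \xrightarrow{\widetilde{\pi}_\ast u}\mathscr E_1\to R^1\widetilde{\pi}_\ast\mathscr L(\phi)\to 0,
\]
so that $-R^\bullet\widetilde{\pi}_\ast\mathscr L(\phi)=[\mathscr E_1]-[\mathscr E_0]$ in the Grothendieck group, whence $c(-R^\bullet\widetilde{\pi}_\ast\mathscr L(\phi))=c(\mathscr E_1-\mathscr E_0)$ and therefore $\mathsf w^r_d(\phi)=\Delta^{(r+1)}_{g-d+r}c(\mathscr E_1-\mathscr E_0)$. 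Setting $e=\rk\mathscr E_0$ and $f=\rk\mathscr E_1$, the relevant rank threshold is $k=e-(r+1)=f-(g-d+r)$ (consistent with the virtual rank $e-f=d-g+1$), so that $e-k=r+1$ and $f-k=g-d+r$. This is exactly the data for which the Thom--Porteous determinant attached to the $k$-th degeneracy locus $D_k(\widetilde{\pi}_\ast u)=\{\rk\le k\}$ reads $\Delta^{(e-k)}_{f-k}c(\mathscr E_1-\mathscr E_0)=\Delta^{(r+1)}_{g-d+r}c(\mathscr E_1-\mathscr E_0)$; I would double-check the placement of the two superscripts against the elementary case of a section $\mathscr O\to F$, where the formula must reproduce the top Chern class $c_f(F)$. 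By construction $D_k(\widetilde{\pi}_\ast u)=\mathcal W^r_d(\phi)$, and its expected codimension is $(e-k)(f-k)=(r+1)(g-d+r)=g-\rho$.

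With this in place, the support statement is immediate from the refined Thom--Porteous formula, which produces a class in the Chow group $A_\bullet(\mathcal W^r_d(\phi))$ of the degeneracy locus whose image under proper pushforward along the closed immersion $\mathcal W^r_d(\phi)\hookrightarrow\jbgp(\phi)$ is $\Delta^{(r+1)}_{g-d+r}c(\mathscr E_1-\mathscr E_0)=\mathsf w^r_d(\phi)$; this is precisely the claim that $\mathsf w^r_d(\phi)$ is supported on $\mathcal W^r_d(\phi)$. For the second statement, when $\mathcal W^r_d(\phi)$ is pure of the expected codimension $g-\rho$, Fulton's theorem \cite[Theorem~14.4]{Ful} identifies the refined class with the fundamental cycle $[\mathcal W^r_d(\phi)]$ of the scheme (the base $\jbgp(\phi)$ being smooth, as $\phi$ is nondegenerate, the determinantal locus is automatically Cohen--Macaulay of the expected codimension, hence has no embedded components). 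Combining with the previous paragraph gives $\mathsf w^r_d(\phi)=[\mathcal W^r_d(\phi)]$.

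The only genuine care needed lies in the ambient setting: Fulton's formula is stated for schemes, whereas $\jbgp(\phi)$ is a smooth Deligne--Mumford stack, so I would deduce the statement either by pulling $\widetilde{\pi}_\ast u$ back to a smooth atlas --- degeneracy loci, their codimension, and their refined Chow classes being local and compatible with smooth pullback --- or by invoking stacky intersection theory with rational coefficients. I expect the main subtlety to be purely organisational: confirming the index conventions in $\Delta^{(r+1)}_{g-d+r}$ and the matching $k=e-(r+1)=f-(g-d+r)$, and noting that everything is independent of the auxiliary divisor $H$, exactly as already observed when identifying $\mathcal W^r_d(\phi)$ with the vanishing locus $Z(\wedge^{k+1}\widetilde{\pi}_\ast u)$.
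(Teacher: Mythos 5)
Your proposal is correct and takes essentially the same route as the paper: the paper's proof also consists of observing that $\mathsf w^r_d(\phi)$ is by construction the Thom--Porteous class attached to the degeneracy scheme $Z\bigl(\wedge^{k+1}\widetilde{\pi}_\ast u\bigr)=\mathcal W^r_d(\phi)$ (whence the support statement, exactly as in the $r=d=0$ case of \cite[Lemma~6]{HKP}), and of citing \cite[Theorem~14.4]{Ful} for the fundamental-class statement when the codimension is the expected one. Your extra bookkeeping --- the four-term exact sequence giving $-R^\bullet\widetilde{\pi}_\ast\mathscr L(\phi)=[\mathscr E_1]-[\mathscr E_0]$, the index matching $k=e-(r+1)=f-(g-d+r)$, and the Cohen--Macaulay and smooth-atlas remarks --- simply makes explicit what the paper leaves implicit.
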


\begin{proof}
The first statement is proven in exactly the same manner as the first statement of \cite[Lemma~6]{HKP} (dealing with the case $r=d=0$), namely by observing that the class \eqref{wrdphi} is by construction supported on the degeneracy scheme \eqref{VanLocus}.
The second statement follows from \cite[Theorem~14.4]{Ful}. 
\end{proof}

\begin{example} \label{drc1}
For $r=0$ we have 
\be\label{Rzero}
\mathsf w^0_d(\phi)= c_{g-d} (-R^\bullet\widetilde{\pi}_\ast \mathscr{L}(\phi)).
\ee
These classes can therefore be seen as some formal analogues of the $\lambda$-classes on $\mbgp$, where $-R^\bullet\widetilde{\pi}_\ast \mathscr{L}(\phi)$ is taking the role of the pushforward of the relative dualising sheaf, namely of the Hodge bundle $\mathbb E=\pi_\ast\omega_\pi$. Note that for fixed $d$ the classes \eqref{Rzero} determine, by their defining formula \eqref{wrdphi}, all other classes $\mathsf w^r_d(\phi)$ for arbitrary $r$.
\end{example}

\begin{remark}
While the restriction $\mathcal W^0_d$ of $\mathcal W^0_d(\phi)$ to $\mathcal{M}_{g,P}$ always has the expected dimension (being the image of the $d$-th symmetric product of the universal curve under the summation map), arguing as in \cite[Remark~7]{HKP} one sees that for each stable bipartition $(h,S)$ there exists a nondegenerate $\phi$ such that $\mathcal W^0_d(\phi)$ contains the inverse image in $\jbgp(\phi)$ of the boundary divisor $\Delta_{h,S}$. In particular, $\mathcal W^0_d(\phi)$ is, in general, not even equidimensional.
\end{remark}

From now on we fix integers $\ell\in \mathbb Z$ and  $d_P\defeq \set{d_p|p \in P}$ and set $d \defeq \ell(2g-2)+\sum_pd_p$. For $\phi \in V_{g,n}^d$ nondegenerate, we define the rational map 
\be\label{AJsection73}
\mathsf s = \mathsf s_{\ell, d_P}(\phi) \colon \, \mbgp \dashrightarrow \jbgp(\phi)
\ee
by Rule~\eqref{aj}, for some choice of coefficients $a_{h,S}$. (This map is actually independent of the coefficients $a_{h,S}$ of $C_{h,S}$ as these divisors are zero on the open dense substack that parametrises line bundles over smooth pointed curves). 
\begin{definition} \label{fancydef} We define the pullback classes $\mathsf Z_{\ell, d_P}^r(\phi)$  by the formula
\begin{equation} \label{fancypullback} 
\mathsf Z_{\ell, d_P}^r(\phi) \defeq \mathsf s^*\mathsf w^r_d(\phi) = {p}_*\left(  \mathsf w^r_d(\phi) \cdot \left[\overline{\Sigma}(\phi)\right]  \right),
\end{equation}
where $\overline{\Sigma}(\phi)$ is the closure in $\jbgp(\phi)$ of the image of the section $\mathsf s$.
\end{definition} The second equality of Formula~\eqref{fancypullback} follows from the definition of pullback of an algebraic class by a rational map, and it is well-defined because $\jbgp(\phi)$ is proper.

When $\phi$ is such that the line bundle $\mathsf D$ of \eqref{univdivisor} is $\phi$-stable, the map \eqref{AJsection73} is a well-defined morphism on $\mbgp$ but, because the map is insensitive to the coefficients $a_{h,S}$, the converse is not true. 
\begin{definition} We define the open substack
\[
U(\phi)\defeq U_{\ell, d_P}(\phi)\subset \mbgp
\] 
to be the largest locus where the Abel--Jacobi section $\mathsf{s}= \mathsf{s}_{\ell, d_P}(\phi)$ extends to a well-defined morphism.
\end{definition}
In \cite[Section~6.1]{KassPagani1} the authors describe the locus $U(\phi)$ in terms of $\mathsf{D}$, and we now review that description. For all nondegenerate $\phi \in V_{g,P}^d$ there is a unique modification $\mathsf{D}(\phi)$ of $\mathsf D$ that coincides with $\mathsf D$ on the locus parametrising smooth curves and that is $\phi$-stable on all curves with exactly $1$ node. More explicitly, $\mathsf{D}(\phi)$ is obtained from $\mathsf D$ by modifying the coefficients $a_{h,S}$ of $C_{h,S}$ into coefficients $a_{h,S}(\phi)$ in the unique way that makes the resulting $\mathsf{D}(\phi)$ a divisor that is $\phi$-stable on all curves of $\mbgp$ with $1$ node. By \cite[Proposition~6.4]{KassPagani1} the  open substack  
$U(\phi)$
 can be characterised as the locus of $\mbgp$ where $\mathsf{D}(\phi)=\mathsf{D}_{\ell, d_P}(\phi) $ is $\phi$-stable. 

We now show how Theorem~\ref{MainThm} allows one to compute the restriction to $U(\phi)$ of the class $\mathsf s^*\mathsf w^r_d(\phi)$. Chiodo's formula recovers the particular case when $\mathsf{D}(\phi)$ equals $\ell \widetilde{K}_{\pi} +\sum_{p \in P} d_p \sigma_p$.

\begin{corollary}\label{cor:PB} Let $\phi\in V_{g,P}^d$ be nondegenerate. Then the equality of classes
\be \label{chernclasses}
\mathsf Z_{\ell, d_P}^r(\phi) = \Delta_{g-d+r}^{(r+1)} c (-R^\bullet\pi_\ast\mathscr O(\mathsf{D}(\phi)) )
\ee
holds on the open substack $U(\phi)$ of $\mbgp$.
\end{corollary}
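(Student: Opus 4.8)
The plan is to reduce the statement to a comparison of two pushforwards along closely related maps, using the fact that the restriction to $U(\phi)$ kills all the geometry where $\mathsf s$ fails to be a morphism. First I would observe that by the definition of $U(\phi)$, over this open substack the Abel--Jacobi section $\mathsf s$ is a genuine morphism $\mathsf s\colon U(\phi)\to \jbgp(\phi)$, and moreover the modified divisor $\mathsf D(\phi)$ is $\phi$-stable there. The key point is that $\mathsf s$ classifies precisely the torsion-free sheaf $\O(\mathsf D(\phi))$ on the universal curve: by Franchetta's description \eqref{aj} together with the characterisation of $U(\phi)$ via \cite[Proposition~6.4]{KassPagani1}, the pullback of the universal sheaf $\mathscr L(\phi)$ along the base-change of $\mathsf s$ to the universal curve is isomorphic to $\O(\mathsf D(\phi))$. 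This is the geometric heart of the corollary.

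Next I would set up the fibre-product diagram making this precise. Pulling back the universal curve $\widetilde{\pi}\colon \jbgp(\phi)\times_{\mbgp}\cbgp\to\jbgp(\phi)$ along $\mathsf s|_{U(\phi)}$ yields the universal curve $\pi\colon \cbgp\to\mbgp$ restricted to $U(\phi)$, and by the preceding paragraph the tautological sheaf $\mathscr L(\phi)$ pulls back to $\O(\mathsf D(\phi))$. Since $p\colon\jbgp(\phi)\to\mbgp$ is flat and $\mathsf s$ is a section of it over $U(\phi)$, cohomology and base change applies and gives an isomorphism of complexes
\[
\mathsf s^\ast R^\bullet\widetilde{\pi}_\ast\mathscr L(\phi)\;\cong\;R^\bullet\pi_\ast\O(\mathsf D(\phi))
\]
in the derived category over $U(\phi)$. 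Taking total Chern classes of the negatives of both sides, and using that Chern classes and the Thom--Porteous determinant $\Delta^{(r+1)}_{g-d+r}$ commute with the pullback $\mathsf s^\ast$ (both being natural operations on $K$-theory classes), I would deduce
\[
\mathsf s^\ast\bigl(\Delta^{(r+1)}_{g-d+r}c(-R^\bullet\widetilde{\pi}_\ast\mathscr L(\phi))\bigr)
= \Delta^{(r+1)}_{g-d+r}c(-R^\bullet\pi_\ast\O(\mathsf D(\phi)))
\]
over $U(\phi)$. The left-hand side is $\mathsf s^\ast\mathsf w^r_d(\phi)=\mathsf Z^r_{\ell,d_P}(\phi)$ by the definitions \eqref{wrdphi} and \eqref{fancypullback}, which yields \eqref{chernclasses}.

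The main obstacle I anticipate is justifying the cohomology-and-base-change step rigorously in the derived sense: one must produce an honest isomorphism of \emph{perfect complexes} $\mathsf s^\ast R^\bullet\widetilde{\pi}_\ast\mathscr L(\phi)\cong R^\bullet\pi_\ast\O(\mathsf D(\phi))$, not merely an equality of the individual cohomology sheaves, since the Chern classes and the determinant $\Delta^{(r+1)}_{g-d+r}$ depend only on the $K$-theory class but the Thom--Porteous formalism was set up via the two-term presentation after twisting by a $\widetilde\pi$-ample $H$. The clean way is to fix such an $H$, note that $\mathsf s^\ast$ of the presentation $\mathscr E_0\to\mathscr E_1$ of $R^1\widetilde\pi_\ast\mathscr L(\phi)$ is, by flat base change for $p$, a presentation of $R^1\pi_\ast\O(\mathsf D(\phi))$ over $U(\phi)$, so the two degeneracy constructions literally coincide; this also shows the class $\mathsf Z^r_{\ell,d_P}(\phi)$ is supported on the expected Brill--Noether locus, consistently with Lemma~\ref{makessense}. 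The second, more routine point is checking that the second equality in \eqref{fancypullback}, defining $\mathsf s^\ast$ for the rational map via $p_\ast(\,\cdot\,\cap[\overline\Sigma(\phi)])$, agrees with the naive pullback $\mathsf s^\ast$ of the morphism over $U(\phi)$; this holds because $\overline\Sigma(\phi)$ restricts over $U(\phi)$ to the graph of the morphism $\mathsf s$, so the projection-formula definition reduces to ordinary pullback there.
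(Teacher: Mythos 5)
Your proposal is correct and follows essentially the same route as the paper's own proof: the Cartesian square defining the base-changed section $\mathsf s'$, the identification ${\mathsf s'}^\ast\mathscr L(\phi)\cong\O(\mathsf D(\phi))$ over $U(\phi)$, cohomology and base change, and the compatibility of Chern classes and the Thom--Porteous determinant with pullback along the morphism $\mathsf s|_{U(\phi)}$ (a ring homomorphism). The only minor slip is that the flatness relevant for the base-change step is that of $\widetilde\pi$ together with the representation of $R^\bullet\widetilde\pi_\ast\mathscr L(\phi)$ by a two-term complex of vector bundles, not the flatness of $p$; your ample-twist presentation argument supplies exactly this, so the gap is cosmetic.
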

\begin{proof} 
Consider the Cartesian square
\[
\begin{tikzcd}
\cbgp\MySymb{dr} \arrow[dashed]{r}{{\mathsf s'}}\arrow{d}{\pi} & \jbgp(\phi) \times_{\mbgp}\cbgp \arrow{d}{\widetilde{\pi}} \\
\mbgp \arrow[dashed]{r}{\mathsf s} & \jbgp(\phi)
\end{tikzcd}
\]
defining ${\mathsf s}'$.
We have the following equalities in the Chow group of $U(\phi)$:
\[
\mathsf s^*c_k(-R^\bullet\widetilde{\pi}_\ast \mathscr{L}(\phi))=c_k \mathsf s^*(-R^\bullet\widetilde{\pi}_\ast \mathscr{L}(\phi)) = c_k (-R^\bullet\pi_\ast {\mathsf s'}^* \mathscr{L}(\phi)) = c_k (-R^\bullet\pi_\ast\O(\mathsf{D}(\phi))).
\]
All equalities require to restrict to the locus where $\mathsf s$ is a morphism. The first follows from the fact that Chern classes commute with pullbacks via lci morphisms. The second is cohomology and base change \cite[Theorem~8.3.2]{illusie}, using that $\widetilde{\pi}$ is flat and $R^\bullet\widetilde{\pi}_\ast \mathscr{L}(\phi)$ is represented by a two-term complex of vector bundles. The third and the last follow from the definition of a tautological sheaf and of ${\mathsf s'}$.
Formula~\eqref{chernclasses} now follows from the definition of $\mathsf Z_{\ell, d_P}^r(\phi)$ and from the fact that the pullback along the morphism $\mathsf s$ is a ring homomorphism.
\end{proof}

Combining Formula~\eqref{chernclasses} with the formula
\be \label{inversion}
c_t(\mathsf F)=\left[ \exp \left( \sum_{s \geq 1} (-1)^{s-1}(s-1)! \ch_s(\mathsf F) \right)\right]_{t} 
\ee 
expressing the Chern classes of a $K$-theory element $\mathsf F$ in terms of the Chern character, and then applying Theorem~\ref{MainThm}, yields an explicit formula, in terms of decorated boundary strata classes, for the restriction of $\mathsf Z_{\ell, d_P}^r(\phi)$ to the open locus $U(\phi)$ of $\mbgp$. In particular, this computes $\mathsf Z_{\ell, d_P}^r(\phi)$ for all $\phi$ such that the corresponding Abel--Jacobi section~\eqref{AJsection73} extends to a morphism on $\mbgp$.

\subsection{Relation to the Double Ramification Cycle} \label{drc2}

We conclude this section by relating the classes $\mathsf Z^r_{\ell, d_P}(\phi)$ (defined in \ref{fancydef}) for $r=d=0$ to the large body of literature on the  Double Ramification Cycle (DRC). We will start by introducing the DRC, following the perspective of \cite{HKP}, which is in turn based on the resolution of the indeterminacy of the Abel--Jacobi section by  D.~Holmes \cite{holmes}. For more details we refer the reader to \cite{HKP}.

Let  $\mathcal{J}_{g,P}^{\underline{0}}$ be the universal generalised Jacobian, or the moduli stack of multidegree zero line bundles on stable curves (equivalently, the unique semiabelian extension of the degree zero universal Jacobian over $\mathcal{M}_{g,P}$). For fixed integers $\ell \in \mathbb{Z}, d_P \colon \, P \to \mathbb{Z}$ such that 
\[
\ell(2g-2) + \sum_{i\in P}~d_i=0,
\]
let $\mathsf S \subset \mathcal{J}_{g,P}^{\underline{0}}$ be the closure of the image of the Abel--Jacobi section $\mathsf s = \mathsf s_{\ell, d_P}\colon \, \mbgp \dashrightarrow \mathcal{J}_{g,P}^{\underline{0}}$. Call $f$ the restriction to $\mathsf S$ of the forgetful morphism $\mathcal{J}_{g,P}^{\underline{0}} \to \mbgp$, and consider the  fiber product diagram
\[
\begin{tikzcd}[row sep = large]
\mathsf S \times_{\mbgp} \mathcal{J}_{g,P}^{\underline{0}} \MySymb{dr} \arrow[]{r}\arrow{d} &  \mathcal{J}_{g,P}^{\underline{0}} \arrow{d} \\
\mathsf S \arrow[bend left,swap]{u}[description]{\tilde{e}} \arrow[bend right]{u}[description]{\tilde{\mathsf s}} \arrow{r}{f} &  \mbgp \arrow[bend left,swap]{u}[description]{e} \arrow[bend right,dashed]{u}[description]{\mathsf s} 
\end{tikzcd}
\]
that defines the upper left corner. Here $\tilde{\mathsf s}$ is the inclusion and $\tilde{e}$ is the pullback of the zero section $e$. Denoting by $[\widetilde{\mathsf S}]$ the class of the image of $\tilde{\mathsf s}$,  one can define the $\ell$-twisted DRC following Holmes' work \cite{holmes} by
\begin{equation} \label{defDRC}
\mathsf{DRC}(\ell, d_P) \defeq f_* \tilde{e}^* [\widetilde{\mathsf S}].
\end{equation}
(The fact that when $\ell=0$ this definition coincides with the ``usual'' DRC  defined as the pushforward of the virtual class on the moduli space of relative stable maps to rubber $\mathbb{P}^1$ follows from \cite[Theorem~1.3]{holmes}, combined with the observation in \cite[Lemma~11]{HKP} that Holmes' stack $\mathcal{M}_{g,n}^{\lozenge}$ equals the normalisation of $\mathsf S$).

Denoting by $[\mathsf E]$ the class of the image of the zero section in $\mathcal{J}_{g,P}^{\underline{0}}$,  we deduce the equality of classes
\be
\mathsf{DRC}(\ell, d_P) = \mathsf{s}_{\ell, d_P}^*[\mathsf E]
\ee
by the push-pull formula and by the definition of pullback along the rational map $\mathsf s$.
This expression for the DRC is now closely related to the definition of the classes $\mathsf Z$ (Definition~\ref{fancydef}). Indeed, whenever $\phi \in V_{g,n}^0$ is nondegenerate and such that  $\mathcal{J}_{g,P}^{\underline{0}}\subset \jbgp(\phi)$, by \cite[Corollary~10]{HKP} we have that $\mathsf{w}^0_0(\phi)= [\mathsf E]$, so that $\mathsf{DRC}(\ell, d_P)=\mathsf{Z}^0_{\ell, d_P}(\phi)$. Combining this with Corollary~\ref{cor:PB}, we deduce the equality
\be \label{overct}
\mathsf{DRC}(\ell, d_P)_{|U(\phi)} = c_g (-R^\bullet{\pi}_\ast \mathsf{D}(\phi))_{|U(\phi)},
\ee
which is valid whenever the inclusion  $\mathcal{J}_{g,P}^{\underline{0}}\subset \jbgp(\phi)$ holds. Note that $U(\phi)$ always contains the moduli stack $\mathcal{M}_{g,P}^{\mathrm{ct}}$ of curves of compact type.

The right hand side of \eqref{overct} can be computed in terms of standard tautological classes by applying Theorem~\ref{MainThm} in combination with \eqref{inversion}. For $\ell=0$, the left hand side of \eqref{overct} has been computed in terms of standard tautological classes by Janda--Pandharipande--Pixton--Zvonkine \cite{MR3668650}. This produces lots of explicit relations in the tautological ring of the moduli stack $\mathcal{M}_{g,P}^{\mathrm{ct}}$ of curves of compact type.  We do not know if there is any reason to expect that these relations should be expressible as linear combinations of known ones, i.e. Pixton's relations proven in \cite{PPZ} and \cite{Janda}.

 Relation~\eqref{overct} is also valid over $\mbgp$ for some choices of $\ell, d_P$. In \cite[Proposition~14]{HKP} the authors observed that  $U (\phi)$ coincides with $\mbgp$  if and only if $\ell=0$ and $d_P=e_i - e_j$ for some $i,j \in P$, where $e_t\colon \, P \to \mathbb{Z}$ is defined by \[e_t(p)=\begin{cases} \displaystyle 1 &  \textrm{ when } t =p\\ 0 & \textrm{otherwise.} \end{cases} \]

For $\mathsf D_{i,j}= \sigma_i - \sigma_j$, Relation~\eqref{overct} becomes
\be\label{explicit}
\mathsf{DRC}(\ell=0, e_i - e_j)= c_{g} (-R^\bullet {\pi}_\ast \mathsf{D}_{i,j}(\phi)) \,\,\in\,\, R^{g}(\mbgp),
\ee
which again is valid whenever $\phi$ is such that the inclusion  $\mathcal{J}_{g,P}^{\underline{0}}\subset \jbgp(\phi)$ holds.  Explicitly, the modified divisor $\mathsf{D}_{i,j}(\phi)$ equals
\[
\mathsf D_{i,j}(\phi) = \sigma_i - \sigma_j - \sum_{(h,S): i \in S, j \notin S} C_{h,S} + \sum_{(h,S): j \in S, i \notin S} C_{h,S}.
\]
Again, the right hand side of \eqref{explicit} is computed by combining Theorem~\ref{MainThm} with \eqref{inversion}, and the left hand side was calculated in \cite{MR3668650}. Using \cite{admcycles} we have verified that the ensuing relation of standard tautological classes can be expressed as a linear combination of Pixton's relations for all $g \leq 4$. This also provides a non-trivial check of our formula in Theorem~\ref{MainThm}. Again, we do not know of an a priori reason to expect these relations to follow from Pixton's, except when $i=j$ where the right hand side of \eqref{explicit} simply equals $\lambda_g$.

\section{Open problems}

We conclude the paper with a list of natural open questions.

\subsection{Is $\mathsf Z(\phi)$ tautological?}

Formula~\eqref{chernclasses} implies that the restriction of each class $\mathsf Z(\phi)$ to $U(\phi)$ is tautological on $U(\phi)$ -- meaning that it is the restriction to $U(\phi)$ of a tautological class globally defined on $\mbgp$. That tautological class is explicitly expressed in terms of decorated boundary strata by combining Theorem~\ref{MainThm} with Formulas~\eqref{chernclasses} and~\eqref{inversion}.  We do not know whether the class $\mathsf Z(\phi)$ is, in general, itself tautological on $\mbgp$, although we do expect that this should be the case.
Except for when $\mathsf Z(\phi)$ has codimension $1$ or $2$ (when we know that the entire cohomology of $\mbgp$ is tautological), the only classes  $\mathsf Z(\phi)$ that we know to be tautological on $\mbgp$ for general $g$ and $P$ are those for $r=d=k=0$ and $\phi$ a small perturbation of $\underline{0} \in V_{g,P}^0$. This follows from the main result of \cite{HKP}, showing that this class coincides with the \emph{Double Ramification Cycle}, see Section~\ref{drc2}. The latter is shown to be tautological in \cite{FPtaut}.

\subsection{Wall-crossing}

For fixed $d \in \mathbb{Z}$ and for every choice of nondegenerate elements $\phi$ and $\phi'$ of $V_{g,P}^d$ one has classes $\mathsf w^r_d(\phi) \in A^\bullet(\jbgp(\phi))$ and $\mathsf w^r_d(\phi') \in A^\bullet(\jbgp(\phi'))$. A natural question is to ``compute'' (in terms of some  natural classes) the difference 
\[
\mathsf w^r_d(\phi) - \alpha^*(\mathsf w^r_d(\phi')) \, \in  A^{g-\rho}(\jbgp(\phi)),
\] where $\alpha$ is any birational isomorphism $\jbgp(\phi) \dashrightarrow \jbgp(\phi')$ that commutes with the forgetful morphisms to $\mbgp$ (such birational maps are esplicitly characterised in \cite[Section~6.2]{KassPagani1}). To the best of our knowledge, this question has been answered only for the case of the theta divisor, namely when $r=0$ and $d=g-1$, in \cite[Theorem~4.1]{KassPagani0}. 

Another natural question is to compute the difference of the pullbacks 
\be\label{differenceZ}
\mathsf Z^r_{\ell, d_P}(\phi) - \mathsf Z^r_{\ell', d_P'}(\phi') \ \in A^{g-\rho}(\mbgp)
\ee
for different assignments $(\ell, d_P)$, $(\ell', d_P')$ such that $\ell(2g-2) \sum_{p \in P} d_p=\ell'(2g-2) \sum_{p \in P} d_p'=d$ and different nondegenerate $\phi$, $\phi' \in V_{g,P}^d$. The case of the pullback of the theta divisor is again covered explicitly in \cite[Theorem~5.1]{KassPagani0}.  Theorem~\ref{MainThm} immediately allows us to generalise the result in loc.~cit., in the sense that it computes explicitly, in terms of decorated boundary strata classes of $\mbgp$, the difference \eqref{differenceZ}, whenever $\phi$ and $\phi'$ are such that the corresponding Abel--Jacobi sections $\mathsf s$ and $\mathsf {s'}$ extend to morphisms on $\mbgp$. Example~\ref{esempio} checks that the results of this paper match the earlier results of \cite{KassPagani0} for the case of the pullback of the theta divisor.

\medskip

\subsection*{Acknowledgements}
NP would like to thank Jesse Kass for his crucial insight on extending Brill--Noether classes to compactified universal Jacobians. He is also very grateful to Nicola Tarasca for many useful discussions. AR wishes to thank Filippo Viviani for providing several useful insights on the topic of universal Brill--Noether theory, and Bashar Dudin for many fruitful conversations about compactified Jacobians.

NP acknowledges support from the EPSRC First Grant Scheme  EP/P004881/1  with title ``Wall-crossing on universal compactified Jacobians''. AR wishes to thank Max-Planck Institut f\"{u}r Mathematik for support. Jason van Zelm was supported by the Einstein Foundation Berlin during the course of this work.

\bigskip
\bibliographystyle{amsplain}
\bibliography{bib}

\providecommand{\bysame}{\leavevmode\hbox to3em{\hrulefill}\thinspace}
\providecommand{\MR}{\relax\ifhmode\unskip\space\fi MR }
\providecommand{\MRhref}[2]{%
  \href{http://www.ams.org/mathscinet-getitem?mr=#1}{#2}
}
\providecommand{\href}[2]{#2}
\begin{thebibliography}{10}

\bibitem{AC87}
Enrico Arbarello and Maurizio Cornalba, \emph{The {P}icard groups of the moduli
  spaces of curves}, Topology \textbf{26} (1987), no.~2, 153--171.

\bibitem{ACG}
Enrico Arbarello, Maurizio Cornalba, and Phillip~A. Griffiths, \emph{Geometry
  of algebraic curves. {V}olume {II}}, Grundlehren der Mathematischen
  Wissenschaften, vol. 268, Springer, Heidelberg, 2011, With a contribution by
  Joseph Daniel Harris.

\bibitem{Chiodo}
Alessandro Chiodo, \emph{Towards an enumerative geometry of the moduli space of
  twisted curves and {$r$}th roots}, Compos. Math. \textbf{144} (2008), no.~6,
  1461--1496.

\bibitem{FPtaut}
Carel Faber and Rahul Pandharipande, \emph{Relative maps and tautological
  classes}, J. Eur. Math. Soc. \textbf{7} (2005), no.~1, 13--49.

\bibitem{Ful}
William Fulton, \emph{Intersection theory}, Springer, 1984.

\bibitem{graberpanda}
Tom Graber and Rahul Pandharipande, \emph{Constructions of nontautological
  classes on moduli spaces of curves}, Michigan Math. J. \textbf{51} (2003),
  no.~1, 93--109.

\bibitem{Harer}
John Harer, \emph{The second homology group of the mapping class group of an
  orientable surface}, Invent. Math. \textbf{72} (1983), no.~2, 221--239.

\bibitem{holmes}
David Holmes, \emph{{Extending the double ramification cycle by resolving the
  Abel--Jacobi map}}, Journal of the Institute of Mathematics of Jussieu
  (2019), 1--29.

\bibitem{HKP}
David Holmes, Jesse~Leo Kass, and Nicola Pagani, \emph{Extending the double
  ramification cycle using {J}acobians}, Eur. J. Math. \textbf{4} (2018),
  no.~3, 1087--1099.

\bibitem{illusie}
Luc Illusie, \emph{Grothendieck's existence theorem in formal geometry},
  Fundamental Algebraic Geometry, Math. Surveys Monogr., vol. 123, Amer. Math.
  Soc., Providence, RI, 2005, With a letter (in French) of Jean-Pierre Serre,
  pp.~179--233.

\bibitem{Janda}
Felix Janda, \emph{{Tautological relations in moduli spaces of weighted pointed
  curves}}, {\url{https://arxiv.org/abs/1306.6580}}.

\bibitem{MR3668650}
Felix Janda, Rahul Pandharipande, Aaron Pixton, and Dimitri Zvonkine,
  \emph{Double ramification cycles on the moduli spaces of curves}, Publ. Math.
  Inst. Hautes \'{E}tudes Sci. \textbf{125} (2017), 221--266. \MR{3668650}

\bibitem{KassPagani0}
Jesse~Leo Kass and Nicola Pagani, \emph{Extensions of the universal theta
  divisor}, Adv. Math. \textbf{321} (2017), 221--268.

\bibitem{KassPagani1}
\bysame, \emph{{The stability space of compactified universal Jacobians}},
  {\url{arXiv:1707.02284}}, 2017.

\bibitem{Mumford1983}
David Mumford, \emph{Towards an enumerative geometry of the moduli space of
  curves}, pp.~271--328, Birkh{\"a}user Boston, Boston, MA, 1983.

\bibitem{PPZ}
Rahul Pandharipande, Aaron Pixton, and Dimitri Zvonkine, \emph{Relations on
  {$\overline{\mathcal M}_{g,n}$} via {$3$}-spin structures}, J. Amer. Math.
  Soc. \textbf{28} (2015), no.~1, 279--309.

\bibitem{admcycles}
Aaron Pixton, Vincent Delecroix, Johannes Schmitt, and Jason van Zelm,
  \emph{\texttt{admcycles} - a {S}age program for calculations in the
  tautological ring of {$\overline{\mathcal{M}}_{g,n}$} and computation of
  cycles of admissible covers},
  \url{https://people.math.ethz.ch/~schmittj/admcycles.html}.

\end{thebibliography}

\end{document}